\newcommand{\cmark}{\ding{51}}%
\definecolor{mygreen}{HTML}{43a047}
\def\Re{\textup{Re}}
\def\eps{\varepsilon}
\def\beps{\bar{\varepsilon}}
\def \CfrakKeps{C_{\frakKeps}}
\def\calM{\mathcal{M}}
\def\ulal{\underline{\mathfrak{m}}}
\def\olal{\overline{\mathfrak{m}}}
\newcommand{\Om}{\Omega}
\newcommand{\D}{\Delta}
\newcommand{\Dt}{\textup{D}_t}
\def\aaa{\mathfrak{m}}
\def\calU{\mathcal{U}}
\definecolor{darkgreen}{rgb}{0,0.5,0}
\newcommand{\ueps}{u^\varepsilon}
\newcommand{\uteps}{u_t^\varepsilon}
\newcommand{\utteps}{u_{tt}^\varepsilon}
\newcommand{\cAone}{C_{\textbf{A}_1}}
\newcommand{\cAtwo}{C_{\textbf{A}_2}}
\newcommand{\ut}{u_t}
\newcommand{\utt}{u_{tt}}
\newcommand{\ddt}{\frac{\textup{d}}{\textup{d}t}}
\newcommand{\Dal}{{\textup{D}}_t^\alpha}
\newcommand{\Doal}{{\textup{D}}_t^{1-\alpha}}
\newcommand{\dt}{\, \textup{d} t}
\newcommand{\ds}{\, \textup{d} s }
\newcommand{\dx}{\, \textup{d} x}
\newcommand{\domega}{\, \textup{d} \omega}
\newcommand{\dxs}{\, \textup{d}x\textup{d}s}
\newcommand{\inttO}{\int_0^t \int_{\Omega}}
\newcommand{\intt}{\int_0^t}
\newcommand{\intT}{\int_0^T}
\newcommand{\intO}{\int_{\Omega}}
\newcommand{\nLtwo}[1]{\|#1\|_{L^2(\Omega)}}
\newcommand{\R}{\mathbb{R}} 
\newcommand{\N}{\mathbb{N}} 
\newcommand{\Ltwo}{L^2(\Omega)}
\newcommand{\Lfour}{L^4(\Omega)}
\newcommand{\Linf}{L^\infty(\Omega)}
\newcommand{\Hone}{H^1(\Omega)}
\newcommand{\Htwo}{H^2(\Omega)}
\newcommand{\Hthree}{H^3(\Omega)}
\newcommand{\Honezero}{H_0^1(\Omega)}
\newcommand{\Honetwo}{{H_\diamondsuit^2(\Omega)}}
\newcommand{\Honethree}{{H_\diamondsuit^3(\Omega)}}
\newtheorem{theorem}{Theorem}
\newtheorem{lemma}{Lemma}
\newtheorem{proposition}{Proposition}
\newtheorem{corollary}[theorem]{Corollary}
\newtheorem*{assumption*}{Assumptions}
\newtheorem{remark}{Remark}
\numberwithin{lemma}{section}
\numberwithin{proposition}{section}
\numberwithin{theorem}{section}
\numberwithin{equation}{section}
\newcommand{\leqnomode}{\tagsleft@true}
\newcommand{\reqnomode}{\tagsleft@false}
\newcommand{\bfq}{\boldsymbol{q}}
\newcommand{\bfmu}{\boldsymbol{\mu}}
\newcommand{\bfxi}{\boldsymbol{\xi}}
\newcommand{\calK}{\mathcal{K}}
\newcommand{\calB}{\mathcal{B}}
\newcommand{\frakKeps}{\mathfrak{K}_\varepsilon}
\newcommand{\frakK}{\mathfrak{K}}
\newcommand\Lconv{\ast}
\newcommand\Fconv{\star}
\newcommand{\TK}{\mathcal{T}}
\newcommand{\rt}{\tau_\theta}
\definecolor{grey}{rgb}{0.5,0.5,0.5}
\title[Quasilinear wave  equations with fractional dissipation]{Limiting behavior of quasilinear wave equations \\[1mm] with fractional-type dissipation}
\subjclass[2010]{35L05, 35L72}
\keywords{quasilinear wave equations, Westervelt's equation, fractional dissipation, well-posedness, singular limits}
\author[B. Kaltenbacher, M. Meliani, and V. Nikoli\'{c}]{\small Barbara Kaltenbacher, Mostafa Meliani, and Vanja Nikoli\'{c}}
\address{  \small
	Department of Mathematics, 
	Alpen-Adria-Universit\"at Klagenfurt 
	\\ Universit\"atsstra\ss e 65--67, A-9020 Klagenfurt, Austria}
\email{barbara.kaltenbacher@aau.at}
\address{ 
	Department of Mathematics \\ 
	Radboud University   \\ 
	Heyendaalseweg 135,
	6525 AJ Nijmegen, The Netherlands}
\email{mostafa.meliani@ru.nl} 
\email{vanja.nikolic@ru.nl}
\begin{document}
\vspace*{4mm}
\begin{abstract}
	In this work, we investigate a class of quasilinear wave equations of Westervelt type with, {in general, nonlocal-in-time} dissipation. They arise as models of nonlinear sound propagation through complex media with anomalous diffusion of Gurtin--Pipkin type. Aiming at minimal assumptions on the involved memory kernels -- which we allow to be weakly singular -- we prove the well-posedness of such wave equations in a general theoretical framework. In particular, the Abel fractional kernels, as well as Mittag-Leffler-type kernels, are covered by our results. The analysis is carried out uniformly with respect to the small involved parameter on which the kernels depend and which can be physically interpreted as the sound diffusivity or the thermal relaxation time. We then analyze the behavior of solutions as this parameter vanishes, and in this way relate the equations to their limiting counterparts. To establish the limiting problems, we distinguish among different classes of kernels and analyze and discuss all ensuing cases. 
\end{abstract}
\vspace*{-8mm}
\maketitle           
\section{Introduction} \label{Sec:Introduction}
The classical theory of nonlinear sound propagation is based on employing the Fourier law of heat conduction
within the system of governing equations.  Using this law has drawbacks as it exhibits the following nonphysical feature: ``a disturbance at any point in the body is felt instantly at every other point"~\cite{gurtin1968general}. This property is often referred to as the paradox of infinite speed of propagation. \\
\indent  In this work, we investigate quasilinear acoustic equations that originate from the use of the general Gurtin--Pipkin flux law~\cite{gurtin1968general} in place of the Fourier one. These equations are given by
\begin{equation} \label{West_general}
	((1+2k\ueps)\uteps)_t-c^2 \Delta \ueps -  \frakKeps *\Delta \uteps = f
\end{equation}
and can be understood as  nonlocal generalizations of the classical Westervelt wave equation~\cite{westervelt1963parametric}. Here $\ueps=\ueps(x,t)$ represents the acoustic pressure, $c>0$ the speed of sound, $k \in \R$ the nonlinearity coefficient, and $f=f(x,t)$ is the sound source.  The function $\frakKeps=\frakKeps(t)$ is a memory kernel and $*$ denotes the Laplace convolution in time:
\[
(\frakKeps\Lconv \uteps)(t)=\int_0^t \frakKeps(t-s)\uteps(s)\ds.
\]
 The memory kernel $\frakKeps$ depends on the parameter $\varepsilon <<1$ which is in practice rather small, and  it is thus important to determine the limiting behavior of solutions to \eqref{West_general} as $\varepsilon \searrow 0$. Different forms of the kernel $\frakKeps$ arise in the literature and will be covered by our investigations. The choice 
 \begin{equation} \label{example_fractional_kernel}
 \frakKeps = \eps  \frac{1}{\Gamma(\alpha)} t^{\alpha-1}, \quad \alpha \in (0,1), 
 \end{equation}
where $\Gamma(\cdot)$ is the Gamma function, leads to the Westervelt equation with time-fractional damping of Djrbashian--Caputo-derivative type~\cite{Djrbashian:1966,Caputo:1967}, derived in~\cite{prieur2011nonlinear}. In this case, the parameter $\varepsilon$ has the physical meaning of the so-called sound diffusivity~\cite{lighthill1956viscosity}. \\
\indent If the underlying flux law involves time-fractional relaxation, the kernels in the resulting acoustic equations have the form
\begin{equation} \label{kernel_form_intro}
\frakKeps =  \,\left(\frac{\rt}{\eps}\right)^{a-b}\frac{1}{\eps^b}t^{b-1}E_{a,b}\left(-\left(\frac{t}{\eps}\right)^a\right), \quad a,b \in (0,1],
\end{equation}
where $E_{a, b}$ is the generalized Mittag-Leffler function; see~\eqref{def_MittagLeffler} below for its definition. Here $\rt$ is a scaling parameter, which we discuss in detail in Section~\ref{Sec:Gurtinpipkin_laws}. In this setting, $\varepsilon$ plays the physical role of the thermal relaxation time. In the physics literature, it is usually denoted by $\tau$.  \\
\indent The goal of the present work is to investigate wave equations \eqref{West_general} in terms of their unique solvability and then perform a singular limit analysis with respect to $\varepsilon$.  In particular, we wish to establish in which sense (and, possibly, at which rate) the solutions of \eqref{West_general}, supplemented by the initial conditions and homogeneous Dirichlet boundary data, converge to the limiting problem as $\eps \searrow 0$. We aim to perform the analysis  under minimally restrictive assumptions on the memory kernel $\frakKeps$ that will allow us to cover the kernels in \eqref{example_fractional_kernel} and \eqref{kernel_form_intro}. \\
\indent This analysis constitutes a challenging task because of the interplay of  quasilinear and nonlocal evolution, together with the non-restrictive assumptions imposed on the memory kernel $\frakKeps$. Unlike available results in the literature for the analysis of nonlinear wave equations with memory (see, e.g., \cite{conti2005singular,lasiecka2017global}), we do not require the kernel to be smooth on $[0,\infty)$ or non-negative. Additionally, the analysis of Westervelt equation must ensure that the leading coefficient does not degenerate; in other words, that
\[
 1+ 2k \ueps \geq \ulal >0
\]
holds uniformly in $\eps$. This task puts an additional strain on the analysis, as it requires obtaining $\eps$-uniform bounds on $\|\ueps\|_{L^\infty(\Linf)}$ and guaranteeing their smallness.
\subsection*{Related works and novelty} To the best of our knowledge, this is the first rigorous work dealing with the limiting behavior of quasilinear wave equations with dissipation of Djrbashian--Caputo-derivative-type.  \\ 
\indent The local-in-time nonlinear acoustic models, on the other hand, are by now well-studied. We refer to~\cite{kaltenbacher2009global, meyer2011optimal, kaltenbacher2011wellposedness} for the analysis of the Westervelt equation with strong damping: 
\begin{equation} \label{West_general_stronglydamped}
	((1+2k\ueps)\uteps)_t-c^2 \Delta \ueps - \eps \Delta  \uteps = 0
\end{equation}
on smooth bounded domains.  Note that \eqref{West_general_stronglydamped} can be cast within the family of equations in \eqref{West_general} by choosing $\frakKeps= \eps \delta_0$, where $\delta_0$ is the Dirac delta distribution. With strong damping in the equation,
 the corresponding Dirichlet boundary-value problem is known to be globally well-posed and the energy of the system decays exponentially with time; see~\cite[Theorems 1.2 and 1.3]{kaltenbacher2009global}.  In the inviscid case ($\eps=0$),  smooth solutions are expected to exist only until a certain propagation time, after which a blow-up occurs. The local well-posedness analysis of the inviscid Westervelt equation with Dirichlet boundary conditions follows by the results of \cite{dorfler2016local}. Numerical experiments indicating gradient blowup for inviscid nonlinear acoustic models can be found, for example, in~\cite[Ch.\ 5]{kaltenbacher2007numerical}.\\
 \indent The limiting behavior of the strongly damped Westervelt equation \eqref{West_general_stronglydamped} as $\eps \searrow 0$ has been studied in~\cite{kaltenbacher2022parabolic}, where sufficient conditions have been determined for solutions $\{\ueps\}_{\eps \in (0, \beps)}$ to strongly converge to the solution of the inviscid problem at a linear rate. \\
 \indent We mention that local acoustic models involving more general nonlinearities or higher-order terms have also received plenty of attention in the literature; see, for example,~\cite{mizohata1993global, tani2017mathematical, kaltenbacher2012well} and the review paper~\cite{kaltenbacher2015mathematics}. Third-order in time models with regular memory have also been extensively studied; see, e.g.,~\cite{lasiecka2017global} and the references given therein. \\
\indent The analysis of quasilinear models in nonlinear acoustics involving time-fractional evolution has been initiated only recently and many questions are open. The well-posedness analysis of the Westervelt equation with fractional dissipation involving the kernel given in \eqref{example_fractional_kernel} can be found in~\cite{kaltenbacher2022inverse}; the analysis in~\cite{baker2022numerical} allows for certain Mittag-Leffler kernels as well. We also point out the singular limit analysis for a class of nonlocal differential equations in~\cite{conti2006singular, conti2005singular}, from which we adopt certain ideas in the limit analysis with Mittag-Leffler kernels in Section~\ref{Sec:LimitingAnalysis}.
\subsection*{Main contributions} 
Our main results pertain to establishing the limiting behavior of solutions to the family of equations \eqref{West_general} on bounded domains, supplemented by initial conditions and Dirichlet boundary data, as $\eps \searrow 0$. As will be shown, the limiting behavior will depend on the form of the kernel $\frakKeps$ and its dependence on $\varepsilon$, and so we will distinguish different classes of kernels motivated also by the physical background of  sound propagation through media with nonlocal heat flux laws.\\
\indent In case of the family of kernels given by $\frakKeps= \eps \frakK$, where $\frakK$ satisfies suitable regularity and coercivity assumptions and does not depend on $\eps$ (this case covers \eqref{example_fractional_kernel}), in the limit one obtains the inviscid Westervelt equation:
\begin{equation} \label{West_inviscid}
	((1+2ku)\ut)_t-c^2 \Delta u = f.
\end{equation}
The solutions of \eqref{West_general} will then be shown to converge in the standard energy norm 
	\begin{equation}\label{energy_norm}
	\|u\|_{\textup{E}} := \left(\|\ut\|^2_{L^\infty(\Ltwo)}+ \|u\|^2_{L^\infty(\Hone)}\right)^{1/2}
\end{equation}
to the solution of the inviscid model at a linear rate; see Theorem~\ref{Thm:Limit} and Corollary~\ref{Corollary:Limit_epsK} for details.  For this result to hold, sufficient smoothness of initial data is needed, namely
\[
 (u_0, u_1) \in \{u_0 \in \Hthree \cap \Honezero: \Delta u_0 \vert_{\partial \Omega}=0 \} \times (\Htwo \cap \Honezero).
\]
Additionally smallness of data {and short final time are needed to ensure uniform well-posedness}; however, smallness of the initial conditions can be imposed in a lower-order norm than that of their regularity space; see Theorem~\ref{Thm:Wellp_2ndorder_nonlocal}. This limiting result significantly generalizes~\cite[Theorem 4.1]{kaltenbacher2022parabolic}, where the limiting behavior of the strongly damped Westervelt equation (obtained here by setting $\frakK=\eps \delta_0$ in \eqref{West_general}) has been studied.  
\\
\indent Remarkably, for Mittag-Leffler-type kernels, the limiting dissipation produces a richer family of limiting equations, as we rigorously discuss in Section~\ref{Sec:LimitingAnalysis}. For example, when $a \leq b$ in \eqref{kernel_form_intro}, the limiting equations are given by 
\begin{equation} 
\begin{aligned}
&	((1+2{k} u)\ut)_t-c^2 \Delta u -  \rt^{a-b}
{\textup{D}_t^{a-b+1}\D u} =f,	\end{aligned} 
\end{equation}
where $\textup{D}_t^{a-b+1}$ denotes the Djrbashian--Caputo fractional derivative of order $a-b+1$. The details of this result together with the convergence rate can be found in Theorem~\ref{Thm:Limit} and Proposition~\ref{Prop:Limit_a>b}.

\subsection*{Organization of the presentation} The rest of the presentation is organized as follows. In Section~\ref{Sec:AcousticModeling}, we motivate this study by discussing the physical background of these nonlocal models in the context of nonlinear acoustics and giving examples of relevant classes of memory kernels $\frakKeps$.  Section~\ref{Sec:UnifWellp_delta} tackles the question of the uniform well-posedness of the nonlinear model \eqref{West_general} under suitable regularity, uniform boundedness, and coercivity assumptions on the memory kernels. The results of this section form the basis for the rigorous study of the limiting behavior.  In Section~\ref{Sec:LimitingAnalysis}, we then establish the limiting behavior of \eqref{West_general} for different classes of $\frakKeps$. The main results of this part are contained in Theorem~\ref{Thm:Limit} and Propositions~\ref{Prop:Limit_a<=b} and~\ref{Prop:Limit_a>b}. Finally, we consider concrete dependencies of $\frakKeps$ on $\eps$ motivated by the physics of nonlinear acoustics and  prove in Section~\ref{sec:verification} that these classes of kernels indeed verify our theoretical assumptions. %
\subsection*{Notation} In the analysis, we use the notation $A\lesssim B$ for $A\leq C\, B$ when the constant $C>0$ does not depend on $\eps$.
 We denote by
\begin{equation} \label{sobolev_withtraces}
	\begin{aligned}
		\Honetwo=&\,H_0^1(\Omega)\cap H^2(\Omega), \\ 
		\Honethree=&\, \left\{u\in H^3(\Omega)\,:\, \mbox{tr}_{\partial\Omega} u = 0, \  \mbox{tr}_{\partial\Omega} \D u = 0\right\},
	\end{aligned}
\end{equation}
the	spaces of functions $H_0^2(\Omega)\subset \Honetwo\subset H^2(\Omega)$ and $H_0^3(\Omega)\subset \Honethree\subset H^3(\Omega)$ that satisfy boundary conditions given above. \\
\indent Given final time $T>0$ and $p, q \in [1, \infty]$, we use $\|\cdot\|_{L^p (L^q(\Omega))}$ to denote the norm on $L^p(0,T;L^q(\Omega))$ and $\|\cdot\|_{L^p_t (L^q(\Omega))}$ to denote the norm on $L^p(0,t;L^q(\Omega))$ for $t \in (0,T)$. We use $(\cdot, \cdot)_{L^2}$ for the scalar product on $\Ltwo$. \\
\indent We employ the following short-hand notation for the Abel kernel:
\begin{equation} \label{def_galpha}
	g_\alpha(t):= \frac{1}{\Gamma(\alpha)} t^{\alpha-1}, \quad \alpha \in (0,1) 
\end{equation}
and introduce the notational convention
\begin{equation} \label{def_g0}
	g_0:= \delta_0,
\end{equation}
where $\delta_0$ is the Dirac delta distribution. \\
\indent Throughout this work, $\Dt^{\eta}$  denotes the Djrbashian--Caputo fractional derivative, which is for $w \in W^{1,1}(0,t)$ defined by
\[
\Dt^{\eta}w(t)=g_{\lceil\eta\rceil - \eta} \Lconv \Dt^{\lceil\eta\rceil} w, \qquad -1<\eta <1;
\]
see, for example,~\cite[\S 1]{kubica2020time} and~\cite[\S 2.4.1]{podlubny1998fractional}.
Here $\lceil\eta\rceil \in\{0,1\}$ is the integer obtained by rounding up $\eta$ and $\Dt^{\lceil\eta\rceil}$ is the zeroth or first derivative operator.
\section{Physical motivation and examples of relevant classes of kernels} \label{Sec:AcousticModeling}
In this section, we motivate different classes of memory kernels $\frakKeps$ that will be covered by our analysis. These kernels arise in wave models of nonlinear sound propagation through media with nonlocal heat flux laws. The nonlocal flux laws are of the Gurtin--Pipkin type~\cite{gurtin1968general} given by
\begin{equation} \label{GP_flux_law}
	\bfq(t)= - \kappa  \, \frakK*\nabla \theta,
\end{equation} 
where $\boldsymbol{q}$ denotes the flux vector, $\theta$ the absolute temperature, and $\kappa$ is the thermal conductivity. As discussed in~\cite{povstenko2011fractional}, different choices of the kernel $\frakK$ lead to a rich family of flux laws that have appeared in the literature. 
\subsection*{Abel kernels} The choice
\begin{equation} \label{def_fractional_kernel}
\frakK = {\rt^{-\alpha}} g_{\alpha} 
\end{equation}
results in the flux law involving the fractional integral:
\begin{equation}\label{kernel_frac2}
	\begin{aligned}
		\bfq(t)= - \kappa {\rt^{-\alpha}} \frac{1}{\Gamma(\alpha)} \int_0^t (t-s)^{\alpha-1}\nabla \theta(s) \ds , \quad  0 < \alpha \leq 1;
	\end{aligned}
\end{equation}
see~\cite[Eq.\ (2.17)]{povstenko2015fractional} and \cite[Eq.\ (B.65)]{holm2019waves} for further modeling details.  The constant $\rt>0$ in \eqref{def_fractional_kernel} and \eqref{kernel_frac2} serves as a scaling factor to adjust for the dimensional inhomogeneity introduced by the fractional integral, in the way done in \cite[Appendix B.4.1.2]{holm2019waves}. We mention that some references account for this by changing the units of the thermal conductivity $\kappa$. 
\subsection*{Exponential kernels} Choosing an exponential kernel~\cite[Eq.\ (2.21)]{povstenko2015fractional}: 
\begin{equation} \label{exponential_kernel}
\frakK_\tau(t) = \frac{1}{\tau} \exp\left(-\frac{t}{\tau}\right) 
\end{equation}
leads to the heat flux law involving short-tail memory
\begin{equation}\label{kernel_exp}
	\begin{aligned}
		\bfq(t)=- \kappa\int_0^t \frac{1}{\tau} \exp\left(-\frac{t-s}{\tau}\right) \nabla \theta(s) \ds,
	\end{aligned}
\end{equation}
where $\tau <<1$ is the intrinsic relaxation time of the heat flux. Assuming $\bfq(0)=0$, \eqref{kernel_exp} can be seen as the solution to the well-known Maxwell--Cattaneo law~\cite{cattaneo1958forme}:
\begin{equation} \label{MC_law}
	\begin{aligned}
		\bfq+\tau \bfq_t= - \kappa \nabla \theta.
	\end{aligned}
\end{equation}
\subsection*{Mittag-Leffler-type kernels}\label{Sec:Gurtinpipkin_laws} Exponential kernels \eqref{exponential_kernel} are, in fact, just a particular case of a large family of Mittag-Leffler memory kernels. In nonlinear acoustics, these are motivated by the Compte--Metzler heat flux laws, put forward and investigated in~\cite{compte1997generalized}:
\begin{alignat}{3}
	\hspace*{-1.5cm}\text{\small(GFE I)}\hphantom{II}&& \qquad \qquad(1+\tau^\alpha \Dal)\boldsymbol{q}(t) =&&\, -\kappa {\rt^{1-\alpha}}\Doal \nabla \theta;\\[1mm]
	\hspace*{-1.5cm}\text{\small(GFE II)}\, \hphantom{I}&&\qquad \qquad (1+\tau^\alpha \Dal)\boldsymbol{q}(t) =&&\, -\kappa {\rt^{\alpha-1}}\Dt^{\alpha-1} \nabla \theta;\\[1mm]
	\hspace*{-1.5cm}\text{\small(GFE III)}\,\, && \qquad \qquad (1+\tau \partial_t)\boldsymbol{q}(t) =&&\, -\kappa {\rt^{1-\alpha}}\Doal \nabla \theta; \\[1mm]
	\hspace*{-1.5cm}\text{\small (GFE)}\hphantom{III}&&\qquad \qquad  (1+\tau^\alpha \Dal)\boldsymbol{q}(t) =&&-\kappa \nabla \theta.\hphantom{{\rt^{1-\alpha}}\Doal }
\end{alignat}
Here, as before, $\tau$ is the thermal relaxation time and we have introduced the scaling $\rt$ to ensure dimensional homogeneity so that $\kappa$ has the usual dimension of thermal conductivity.  These laws can be solved for $\bfq$ and rewritten in the Gurtin--Pipkin form, as noted in~\cite{povstenko2011fractional}:
\begin{equation}
	\begin{aligned}
		\bfq(t) = - \kappa \frakK_\tau * \nabla \theta
	\end{aligned}
\end{equation}
where the memory kernels are now given by
\begin{equation} \label{ML_kernels}
	\begin{aligned}
		\frakK_\tau = \left(\frac{\rt}{\tau}\right)^{a-b}\frac{1}{\tau^b}t^{b-1}E_{a,b}\left(-\left(\frac{t}{\tau}\right)^a\right).
	\end{aligned}
\end{equation}
We recall that the generalized Mittag-Leffler function is given by 
\begin{align} \label{def_MittagLeffler}
	E_{a, b}(t) = \sum_{k=0}^\infty \frac{t^k}{\Gamma(a k + b)}, \qquad a > 0,\ t,\, b \in \mathbb{R};
\end{align}
see, e.g.,~\cite[Ch.\ 2]{kubica2020time}. For the Compte--Metzler laws, the parameters $(a,b)$ should be chosen as in Table~\ref{tab:ker_par_mod},
 assuming additionally $\alpha>1/2$ for the first pair (law GFE I). \\
  \begin{center}
 	\begin{tabular}[h]{|c||c|c|c|c|}
 		\hline
 		~& GFE I & GFE II & GFE III & GFE \\
 		\hline \hline
 		$a$&$\alpha$ & $\alpha$ & $1$ & $\alpha$ \\
 		\hline
 		$b$	&$2\alpha-1$ & $1$ & $\alpha$ & $\alpha$ \\
 		\hline
 	\end{tabular}
 ~\\[2mm]
 	\captionof{table}{Parameters for the Mittag-Leffler kernels motivated by the Compte--Metzler laws}\label{tab:ker_par_mod}
 \end{center}
  Note that setting $a=b=1$ in \eqref{ML_kernels} allows us to cover the exponential kernel in \eqref{exponential_kernel}. 
\subsection*{Resulting acoustic models: Westervelt's equation and beyond} 
\indent The derivation of nonlinear acoustic models based on the governing system of equations of sound motion that involves the Gurtin--Pipkin flux law can be done following closely the steps taken in~\cite[\S 4]{jordan2014second}. The modeling at first leads to one of the two approximations of the original governing system:
the nonlocal wave equation of Blackstock type~\cite[p.\ 20]{blackstock1963approximate}
\begin{equation} \label{Blackstock_nonlocal}
	\psi_{tt}-c^2(1+2\tilde{k}\psi_t) \Delta \psi - \delta \frakK* \D\psi_{t}+ \ell\partial_t |\nabla \psi|^2=0
\end{equation}
or the nonlocal wave equation of Kuznetsov type~\cite{kuznetsov1971equations}
\begin{equation} \label{Kuznetsov_nonlocal}
	\begin{aligned}
		\begin{multlined}[t] (1+2\tilde{k}\psi_t)\psi_{tt}-c^2 \Delta \psi - \delta \frakK* \D\psi_{t}+ \ell\partial_t |\nabla \psi|^2=0,  \end{multlined}
	\end{aligned}
\end{equation}
where $\delta>0$ is the sound diffusivity and $\tilde{k}$, $\ell \in \R$. The equations are expressed in terms of the acoustic velocity potential $\psi=\psi(x,t)$, which is related to the pressure by
\[
u = \varrho \psi_t,
\] 
where $\varrho$ is the medium density.  If cumulative nonlinear effects in sound propagation are dominant so that approximation $|\nabla \psi|^2 \approx c^{-2}\psi_t^2$ holds,  a nonlocal version of the Westervelt equation~\cite{westervelt1963parametric} in potential form is obtained:
\begin{equation} \label{West_nonlocal}
	\begin{aligned}
		\begin{multlined}[t]
			(1+2\tilde{\tilde{k}}\psi_t)\psi_{tt}-c^2 \Delta \psi -  \delta \frakK* \D\psi_{t}=0,
		\end{multlined}
	\end{aligned}
\end{equation}
where $\tilde{\tilde{k}} = \tilde{k} +c^{-2} \ell$. 
Differentiating in time and using the relation $u = \varrho \psi_t$ leads to the equation in the scope of present work:
\begin{equation} \label{West_nonlocal_pressure}
	\begin{aligned}
		((1+2ku)u_{t})_t-c^2 \Delta u - \delta \frakK* \D u_{t}=  \varrho \delta \frakK(t) \Delta \psi_1,
	\end{aligned}
\end{equation}
with $k= \tilde{\tilde{k}}/\varrho$, where we have relied on the following differentiation formula:
\begin{equation} \label{West_first}
	(\frakK* \Delta \psi_t)_t =   \frakK* \Delta \psi_{tt} + \frakK(t) \Delta \psi_1.
\end{equation}
In case of the Mittag-Leffler kernel \eqref{ML_kernels}, we would simply have $\frakK_\tau$ above in place of $\frakK$. 

\subsection*{Unifying the physical parameters} Since we wish to investigate the limiting behavior in $\delta$ and in $\tau$ of the resulting Westervelt equation (where in case of $\tau \searrow 0$, we assume $\delta>0$ to be fixed) and the uniform well-posedness analysis in both cases is qualitatively the same, we unite $\delta$ and $\tau$ here into one parameter $\eps$ and consider the generalized equation:
\begin{equation}  
	\begin{aligned}
		((1+2k\ueps)\ueps_{t})_t-c^2 \Delta \ueps - \frakKeps* \D \uteps=f
	\end{aligned}
\end{equation}
with suitable assumptions to be made on $\frakKeps$ (see \eqref{assumption1} and \eqref{assumption2} below). Setting 
\begin{equation}
\frakKeps= \eps \frakK \quad \text{with } \ \frakK(t)= \rt^{-\alpha} g_{\alpha}(t)
\end{equation}
 will allow us to cover \eqref{West_nonlocal_pressure}, whereas choosing \begin{equation}
	\begin{aligned}
	\frakKeps
	=&\,  \delta \left(\frac{\rt}{\eps}\right)^{a-b} \frac{1}{\eps}\frakK\left(\frac{t}{\eps}\right) \quad \text{with } \ \frakK(t)= t^{b-1} E_{a,b}(-t^a)
	\end{aligned}
\end{equation}
 covers the setting of thermal relaxation. In the latter case, we set $\delta=1$ without loss of generality.  We analyze the equation with a general source term $f$ as the regularity assumptions needed for the analysis below are stronger than what we generally have with the right-hand side in \eqref{West_nonlocal_pressure} (unless one assumes that $\psi_1=0$). \\
\indent  We mention that using the Compte--Metzler flux laws stated as fractional ODEs in the derivation of acoustic equations is also possible but leads to qualitatively different, higher-order in time fractional acoustic models. We refer the interested reader to~\cite{kaltenbacher2022time} for their derivation and the well-posedness analysis.
\section{ Uniform well-posedness analysis} \label{Sec:UnifWellp_delta}
The aim of this section is to establish the well-posedness of quasilinear Westervelt equation \eqref{ibvp_West_general}, uniformly in $\eps$. This result will be the basis for the later study of the limiting behavior. Throughout we assume $\Omega \subset \R^d$, where $d \in \{1, 2, 3\}$, to be a bounded and $C^3$ regular domain. We consider the following initial boundary-value problem:
\begin{equation}\label{ibvp_West_general}
	\left \{	\begin{aligned} 
		&((1+2k\ueps)\uteps)_t-c^2 \Delta \ueps -  \frakKeps *\Delta \uteps = f \quad  &&\text{in } \Omega \times (0,T), \\
		&\ueps =0 \quad  &&\text{on } \partial \Omega \times (0,T),\\
		&(\ueps, \uteps)=(u_0, u_1), \quad  &&\text{in }  \Omega \times \{0\}.
	\end{aligned} \right.
\end{equation}
Before proceeding to the question of solvability, we need to impose certain uniform regularity and coercivity assumptions on the memory kernel $\frakKeps$, which should be minimally restrictive. 
\subsection{Assumptions on the memory kernel in the analysis} \label{Sec:KernelAssumptions} Since we wish to study the limiting behavior of the nonlinear models as $\eps \searrow 0$, we may focus our attention in the  analysis  on an interval $(0, \beps)$ for some fixed $\beps>0$ without loss of generality. We make the following assumption on the kernel's regularity and boundedness.  \vspace*{2mm}

\begin{center}
\fbox{ 
	\begin{minipage}{0.9\textwidth}
		Given $\eps \in (0, \beps)$, the kernel satisfies
		\begin{equation}\label{assumption1} \tag{\ensuremath{\bf {{A}}_1}}
			\frakKeps \in L^1(0,T) \cup \{\varepsilon \delta_0\}  
		\end{equation}
		with the $\varepsilon$-uniform bound:
		\begin{equation}
			\|\frakKeps\|_{\calM(0,T)} \leq \cAone.
		\end{equation}
	\end{minipage}
}
\end{center}
~\\[1mm]

\indent Above we allow for $\frakKeps= \varepsilon\delta_0$, where $\delta_0$ is the Dirac delta distribution, to cover the case of having strong damping in the equation (i.e., $- \varepsilon \Delta \uteps$), although we primarily focus our attention in the analysis and presentation on $\frakKeps \in L^1(0,T)$. We use $\|\cdot\|_{\calM(0,T)}$ to denote the total variation norm which for the examples considered can be understood as:
\begin{equation} \label{def_calM}
\|\frakKeps\|_{\mathcal{M}(0,T)}=\begin{cases}
	\varepsilon &\text{ if }\frakKeps= \varepsilon \delta_0,\\
	\|\frakKeps\|_{L^1(0,T)}&\text{ if }\frakKeps \in L^1(0,T).
\end{cases}
\end{equation}
\indent  In the well-posedness analysis, we will also need a  coercivity assumption in order to achieve sufficient damping from the $\frakKeps$ term.  \vspace*{2mm}
\begin{center}
\fbox{ 
	\begin{minipage}{0.9\textwidth}
		For all $y\in L^2(0, T; \Ltwo)$, it holds that for all $t\in(0,T)$
		\begin{equation}\label{assumption2} \tag{\ensuremath{\bf {A}_{2}}}
			\begin{aligned}
				\int_0^{t} \intO \left(\frakKeps* y \right)(s) \,y(s)\dxs\geq 	C_{\frakKeps} 
				\int_0^{t} \|(\frakKeps* y)(s)\|^2_{\Ltwo} \ds, 
			\end{aligned}
		\end{equation}
		where the constant $C_{\frakKeps}>0$ is uniformly bounded from below:
		\begin{equation}
			C_{\frakKeps} \geq \cAtwo,
		\end{equation}
		where $\cAtwo>0$ does not depend on $\varepsilon \in(0,\beps)$.
	\end{minipage}
}
\end{center}
~\\[2mm]

\indent We mention that the two assumptions are verified for all classes of kernels discussed in the previous section, but postpone the proof of this claim to Section~\ref{sec:verification}.
\subsection*{Strategy in the well-posedness analysis}  To analyze \eqref{ibvp_West_general}, we introduce the fixed-point mapping
\begin{align}
\TK:\phi \mapsto \ueps,
\end{align}
where $\phi$ will belong to a ball in a suitable Bochner space and $\ueps$ will solve the linearized problem 
	\begin{equation} \label{ibvp_2ndorder_lin:Eq}
		\begin{aligned}
			(\aaa(\phi) \uteps)_t-c^2  \Delta \ueps -   \frakKeps*\D\uteps=f
		\end{aligned} 
	\end{equation}
	with the variable leading coefficient
	\begin{align} \label{coeffs_fp}
		\aaa(\phi)=1+2{k}\phi,
	\end{align}
	supplemented by the initial and boundary conditions given in \eqref{ibvp_West_general}.
Clearly, a fixed-point of this mapping $\phi=\ueps$ would solve the nonlinear problem.  This general strategy is common in the analysis of nonlinear acoustic models (see, e.g.,~\cite{kaltenbacher2009global}), however the crucial difference here is that it has to be conducted uniformly in $\eps$. 

\subsection{ Uniform well-posedness of a linear problem with variable principle coefficient} \label{Subsection:LinWellp}
\indent The well-definedness of the mapping as well as the proof of the existence of a unique fixed point rely on the uniform well-posedness of the linear problem, which we tackle first for general smooth and non-degenerate functions $\aaa=\aaa(\phi)$.
\begin{proposition} \label{Prop:WellP_Lin}
	Let $\eps \in (0, \beps)$. Given $T>0$, let 
	\begin{equation}
	\phi \in X_\phi:=L^\infty(0,T; \Honethree) \cap W^{1, \infty}(0,T; \Honetwo).
\end{equation}
 Assume that there exist $\overline{\aaa}$ and $\underline{\aaa}$, independent of $\eps$, such that 
\begin{equation} \label{non-deg_phi}
 0<\ulal \leq \aaa(\phi)=1+2k \phi(x,t)\leq \olal \quad \text{ in } \ \Omega \times (0,T). 
 \end{equation}
  Let  assumptions \eqref{assumption1} and \eqref{assumption2} on the kernel $\frakKeps$ hold.
	Furthermore, assume that the initial conditions satisfy
	\begin{equation} \label{IC_regularity}
		(u_0, u_1) \in \Honethree \times \Honetwo
	\end{equation}
	and let $f \in H^1(0,T; \Honezero)$.
	Then there exists a unique  solution of the linear problem
	\begin{equation}\label{ibvp_West_general_lin}
		\left \{	\begin{aligned} 
			&(\aaa(\phi)\uteps)_t-c^2 \Delta \ueps - \Delta \frakKeps * \uteps = f \quad  &&\text{in } \Omega \times (0,T), \\
			&\ueps =0 \quad  &&\text{on } \partial \Omega \times (0,T),\\
			&(\ueps, \uteps)=(u_0, u_1), \quad  &&\text{in }  \Omega \times \{0\},
		\end{aligned} \right.
	\end{equation}
	 such that
	\begin{equation}\label{regularity}
		\begin{aligned}
			\ueps \in \, \calU =& \,\begin{multlined}[t] L^\infty(0,T; \Honethree) \cap W^{1, \infty}(0,T; \Honetwo) 
				\cap H^2(0,T; \Honezero).
			\end{multlined}
		\end{aligned}
	\end{equation}
The solution fulfills the following estimate:
	\begin{equation} \label{final_est_lin}
		\begin{aligned}
			&\|\Delta \uteps\|_{L^\infty(\Ltwo)}^2+ \|\nabla\D \ueps\|_{L^\infty(\Ltwo)}^2   \\
			\leq&\, C(\phi, T)\,
			(	\nLtwo{\Delta u_1}^2+ \nLtwo{\nabla\D u_0}^2+\|f\|^2_{H^1(\Hone)}),
		\end{aligned}
	\end{equation}
	where the constant has the form
	\begin{equation}
		C(\phi, T) = C_1 \exp( C_2(1+\|\phi\|_{X_\phi}+\ldots +\|\phi\|^6_{X_\phi}) T)
	\end{equation}
	for some $C_{1}$, $C_2>0$ that do not depend on $\eps$.
\end{proposition}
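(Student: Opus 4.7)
The plan is to apply a Galerkin semi-discretization in space together with a priori energy estimates that are uniform in $\varepsilon$. I would use as basis the $L^2$-orthonormal eigenfunctions $\{w_j\}_{j\geq 1}$ of the Dirichlet Laplacian on $\Omega$; since $\Omega$ is of class $C^3$, these lie in $\Honethree$ and satisfy $\Delta w_j = -\lambda_j w_j \in \Honezero$, which is precisely the structure needed for the higher-order test below. The projection produces a linear integro-differential system for the Galerkin coefficients $\alpha^N(t)$ with time-dependent mass matrix $M^N(t)_{ij}=(\aaa(\phi(\cdot,t))w_i,w_j)_{L^2}$. The non-degeneracy condition \eqref{non-deg_phi} renders $M^N(t)$ uniformly positive definite on $[0,T]$, so after inverting it one obtains a standard linear Volterra integro-differential system with $L^\infty$-in-time coefficients, whose global existence and uniqueness on $[0,T]$ are classical.

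To reach the target regularity, I would test, on the Galerkin level, the equation with $\Delta^2 \uteps^N$ (equivalently, apply $-\Delta$ to the equation and test with $\Delta \uteps^N$). The boundary conditions embedded in $\Honethree$, together with the eigenbasis structure, ensure that the integrations by parts close without boundary contributions. The principal part yields the time derivative of an energy equivalent, by \eqref{non-deg_phi}, to $\|\Delta \uteps\|_{L^2}^2 + \|\nabla \Delta \ueps\|_{L^2}^2$, plus lower-order commutator residuals. The convolution term becomes, after one further integration by parts,
\begin{equation*}
\int_\Omega (\frakKeps * \nabla \Delta \uteps)\cdot \nabla \Delta \uteps\,dx,
\end{equation*}
to which assumption \eqref{assumption2} applies component-wise and delivers, upon time integration, the dissipative contribution $\cAtwo\,\|\frakKeps * \nabla \Delta \uteps\|^2_{L^2(L^2)}$. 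This coercivity, and not any pointwise property of $\frakKeps$, is what controls the convolution uniformly in $\varepsilon$.

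The remaining task is to bound the commutator terms coming from $\Delta[(\aaa(\phi)\uteps)_t]$: these contain space derivatives of $\aaa(\phi)=1+2k\phi$ and $\aaa_t(\phi)=2k\phi_t$ multiplied by $\uteps, \nabla \uteps, \Delta \uteps, \utteps$, and $\nabla \utteps$. Since $\utteps$-type factors are not directly controlled by the energy, I would substitute for them from the PDE itself, using $\aaa(\phi)\utteps=-\aaa_t(\phi)\uteps + c^2\Delta \ueps + \frakKeps * \Delta \uteps + f$, and estimate the resulting products by H\"older's inequality together with the embeddings $\Honetwo \hookrightarrow \Linf$ and $\Hone \hookrightarrow L^6$, which are valid since $d\leq 3$. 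Every $\phi$-dependent factor is then bounded by a power of $\|\phi\|_{X_\phi}$; the cubic/quartic nature of the worst commutators, combined with Young's inequality, produces polynomial coefficients up to $\|\phi\|_{X_\phi}^6$, which matches the announced form of $C(\phi,T)$. A Gr\"onwall argument then yields \eqref{final_est_lin} on the Galerkin level, weak-$*$ compactness delivers a limit $\ueps$, and the missing regularity $\utteps \in L^2(0,T;\Honezero)$ is recovered a posteriori by isolating $\aaa(\phi)\utteps$ in the equation and invoking \eqref{non-deg_phi}; uniqueness is obtained by running the same identity on the difference of two solutions with vanishing data.

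The principal obstacle, distinguishing this analysis from the strongly damped Westervelt setting, is that the convolution can only be controlled through the coercivity \eqref{assumption2}, as $\frakKeps$ is allowed to be merely $L^1$ and sign-indefinite. Combined with the variable leading coefficient at the $H^3$ level, this forces every commutator to be absorbed into either the principal energy or the dissipative quantity $\|\frakKeps * \nabla \Delta \uteps\|^2_{L^2(L^2)}$ with constants strictly independent of $\varepsilon$; careful bookkeeping of this $\varepsilon$-independence, in particular when eliminating $\utteps$ and $\nabla \utteps$ via the PDE, constitutes the technical heart of the argument.
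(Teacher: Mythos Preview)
Your proposal is correct and follows essentially the same strategy as the paper: Galerkin approximation with Dirichlet--Laplacian eigenfunctions, testing with $\Delta^2 \uteps$, using \eqref{assumption2} to generate the dissipative term $\cAtwo\|\frakKeps*\nabla\Delta\uteps\|_{L^2(L^2)}^2$, eliminating $\utteps$ and $\nabla\utteps$ via the PDE to close the commutator estimates, and concluding with Gr\"onwall and compactness. The only minor points you leave implicit are the integration by parts in time of the $f$ term (to place $\|f\|_{H^1(H^1)}$ rather than a higher norm on the right) and the precise Young-splitting needed to absorb the cross term $\|\phi\|_{X_\phi}^3\|\frakKeps*\nabla\Delta\uteps\|_{L^2(L^2)}\|\Delta\uteps\|_{L^2(L^2)}$ into the coercivity, but both are routine once identified.
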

\begin{proof} 
	We perform the analysis using a smooth Galerkin discretization in space; see, e.g.,~\cite[\S 7]{evans2010partial} and~\cite[Sec.\  3]{kaltenbacher2022parabolic} for details on this method. The existence of a unique approximate solution follows by reducing the semi-discrete problem to a system of Volterra integral equations of the second kind and employing relatively standard existence arguments, which we therefore provide in Appendix~\ref{Appendix:Semi-discrete}.\\
\indent	The key to the proof is a uniform energy bound with respect to the discretization that must, in this context, also be uniform in $\eps$. We follow the general testing strategy in \cite{kaltenbacher2022parabolic, kaltenbacher2022inverse} with special attention paid to the uniformity in $\eps$.  For notational simplicity, we drop the discretization parameter when denoting the approximate solution below.\\
\indent As we cannot rely much on the dissipation properties of the nonlocal term in the equation, we need sufficient smoothness of the solution to overpower the right-hand side terms arising after testing. For this reason, we test the (semi-discrete) problem with $\Delta^2 \uteps$. After integrating over space, we arrive at
	\begin{equation} \label{identity_1}
		\begin{aligned} 
			(\aaa \utteps -c^2\Delta \ueps-  \frakKeps*\Delta \uteps  + \aaa_t \uteps,\,  \Delta^2 \uteps)_{L^2} = (f, \Delta^2 \uteps)_{L^2}.
		\end{aligned}
	\end{equation}
	We note that the following identity holds:
	\begin{equation}
		\begin{aligned}
			(\aaa \utteps, \Delta^2 \uteps)_{L^2} 
				=&\,  (\aaa\D \utteps, \Delta \uteps)_{L^2}+(\utteps\, \D\aaa+2\nabla \utteps \cdot\nabla\aaa, \Delta \uteps)_{L^2} \\
			=&\,  \frac12\ddt(	\aaa\D \uteps, \Delta \uteps)_{L^2}-\frac12(\aaa_t\D \uteps, \Delta \uteps)_{L^2}+(\utteps \, \D\aaa+2\nabla \utteps \cdot\nabla\aaa, \Delta \uteps)_{L^2} 
		\end{aligned}
	\end{equation}
	because $\utteps \vert_{\partial \Omega} =\Delta \uteps \vert_{\partial \Omega}=0$.  Similarly, we have
		\begin{equation}
		\begin{aligned}
			(  \aaa_t \uteps, \Delta^2 \uteps )_{L^2}=&\,  (\D ( \aaa_t \uteps), \Delta \uteps )_{L^2}\\
			=&\, ( \aaa_t \D \uteps +2 \nabla \aaa_t\cdot \nabla \uteps +  \uteps \D \aaa_t, \Delta \uteps )_{L^2}.
		\end{aligned}
	\end{equation}
	Thus, by integrating by parts in \eqref{identity_1} in space, integrating over $(0,t)$ for $t \in (0,T)$, and employing coercivity assumption \eqref{assumption2}, we arrive at the following energy inequality:
	\begin{equation} \label{energy_ineq_1}
		\begin{aligned}
			&\frac12\nLtwo{\sqrt{\aaa} \Delta \uteps}^2 \Big \vert_0^t + \frac{c^2}{2} \nLtwo{\nabla\D \ueps}^2 \Big \vert_0^t +
			\cAtwo \int_0^{t} \|(\frakKeps*\nabla \D \uteps )(s)\|^2_{\Ltwo} \ds \\
			\leq&\,\begin{multlined}[t] 
				\frac12 \int_0^t (\aaa_t\D \uteps, \Delta \uteps)_{L^2}\ds+ \intt(\utteps \, \D\aaa+2\nabla \utteps \cdot\nabla\aaa, \Delta \uteps)_{L^2} \ds \\
				+ \intO \nabla f \cdot \nabla \D \ueps \dx \Big \vert_0^t - \inttO \nabla f_t \cdot \nabla \Delta \ueps \dxs,
 			\end{multlined}
		\end{aligned}
	\end{equation}
	where we have also used that $f|_{\partial \Om} = 0.$
	We can estimate the first integral on the right-hand side by relying on the embedding $\Htwo \hookrightarrow \Linf$ as follows:
	\begin{equation} \label{est_aaa_t}
		\begin{aligned}
			\frac12 \int_0^t (\aaa_t\D \uteps, \Delta \uteps)_{L^2}\ds
		\leq&\, |k| \|\phi_t\|_{L^\infty(\Htwo)}  \|\Delta \uteps\|^2_{L^2_t(\Ltwo)} \\
		\lesssim&\, \|\phi\|_{X_\phi}  \|\Delta \uteps\|^2_{L^2_t(\Ltwo)} ,
		\end{aligned}
	\end{equation}
	where the resulting term will be handled by Gr\"onwall's inequality later on. 
	We can use H\"older's and Young's inequalities to estimate the $f$ terms in \eqref{energy_ineq_1}:
	\begin{equation} \label{estimates_f}
		\begin{aligned}
		&
		\intO \nabla f \cdot \nabla \D \ueps \dx \Big \vert_0^t - \inttO \nabla f_t \cdot \nabla \Delta \ueps \dxs\\
		\leq&\, \begin{multlined}[t] \frac{1}{c^2}\|\nabla f(t)\|^2_{\Ltwo}+\frac{c^2}{4} \|\nabla \Delta \ueps(t)\|_{\Ltwo}^2+\frac12 \|\nabla f(0)\|^2_{\Ltwo}+ \frac12\|\nabla \Delta u_0\|_{\Ltwo}^2 \\+ 	\|\nabla f_t\|^2_{L^2(\Ltwo)}+  \|\nabla \Delta \ueps\|_{L^2_t(\Ltwo)}^2, \end{multlined}
		\end{aligned}
	\end{equation}
	where the two resulting $\nabla \D \ueps$ terms above can be either absorbed by the left-hand side of \eqref{energy_ineq_1} or handled by Gr\"onwall's inequality. \\
\indent	It remains to estimate the $\utteps$ terms on the right-hand side of \eqref{energy_ineq_1}. To this end, we first use H\"older's and Young's inequalities:
	\begin{equation} \label{est_utteps}
		\begin{aligned}
			&\intt(\utteps \, \D\aaa+2\nabla \utteps \cdot\nabla\aaa, \Delta \uteps)_{L^2} \ds \\
			\leq&\,  \left(\|\utteps\|_{L^2(\Lfour)}\|\Delta \aaa\|_{L^\infty(\Lfour)} + \|\nabla \utteps\|_{L^2(\Ltwo)}\|\nabla \aaa\|_{L^\infty(\Linf)} \right)\|\Delta \uteps\|_{L^2(\Ltwo)}.
		\end{aligned}
	\end{equation}
	By the embeddings $\Hone \hookrightarrow \Lfour$ and $\Htwo \hookrightarrow \Linf$, we further have
	\begin{equation}
		\begin{aligned}
		\|\Delta \aaa\|_{L^\infty(\Lfour)} +	\|\nabla \aaa\|_{L^\infty(\Linf)} \lesssim 	\|\Delta \phi\|_{L^\infty(\Hone)} +	\|\nabla \phi\|_{L^\infty(\Htwo)} \lesssim \|\phi\|_{X_\phi}
		\end{aligned}
	\end{equation}
	and thus
		\begin{equation} \label{est_utt_interim}
		\begin{aligned}
			\intt(\utteps \, \D\aaa+2\nabla \utteps \cdot\nabla\aaa, \Delta \uteps)_{L^2} \ds 
			\leq\,   \|\phi\|_{X_\phi} \|\utteps\|_{L^2(\Hone)} \|\Delta \uteps\|_{L^2(\Ltwo)}.
		\end{aligned}
	\end{equation}
	From here, we can use the (semi-discrete) PDE to further bound the $\utteps$ term. We first have by Poincar\'e's inequality
	\begin{equation}
	\begin{aligned}
		\|\utteps\|_{L^2(\Hone)}  \lesssim \| \nabla \utteps\|_{L^2(\Ltwo)}.
	\end{aligned}
\end{equation}
Then to estimate the right-hand side term, we use the identity
	\begin{equation} 
		\begin{aligned}
			\nabla \utteps
			=&\, \begin{multlined}[t]\nabla [\aaa^{-1}]\Big(  \frakKeps*\Delta \uteps +c^2\Delta \ueps - \aaa_t \uteps+ f\Big)+ \aaa^{-1}\Big(  \frakKeps*\nabla \Delta \uteps +c^2\nabla \Delta \ueps\\-  \nabla [\aaa_t \uteps]+\nabla f\Big).
				\end{multlined}
		\end{aligned}
	\end{equation}
The uniform boundedness of $\aaa$ in \eqref{non-deg_phi} allows us to conclude that
	\begin{equation}
	\begin{aligned}
		\|\nabla  \utteps\|_{L^2(\Ltwo)} 
		\lesssim&\,  \begin{multlined}[t] 
			\|\nabla \aaa\|_{L^\infty(\Linf)}\| \frakKeps*\Delta \uteps +c^2\Delta \ueps - \aaa_t \uteps+ f\|_{L^2(\Ltwo)}\\
		 + \| \frakKeps*\nabla \Delta \uteps +c^2\nabla \Delta \ueps-  \nabla [\aaa_t \uteps]+\nabla f\|_{L^2(\Ltwo)}.
		\end{multlined}
	\end{aligned}
\end{equation}
Further estimating the right-hand side terms leads to
	\begin{equation}
		\begin{aligned}
			&\|\nabla  \utteps\|_{L^2(\Ltwo)} \\
			\lesssim&\,  \begin{multlined}[t] 
			\| \phi\|_{X_\phi} \Big ( \| \Delta \ueps\|_{L^2(\Ltwo)}+  \|\frakKeps\|_{\calM(0,t)} \|\D \uteps \|_{L^2(\Ltwo)}+\| \phi\|_{X_\phi} \|\uteps\|_{L^2(\Ltwo)}+\|f \|_{L^2(\Ltwo)}\Big) \\
				+ \| \frakKeps*\nabla \Delta \uteps\|_{L^2(\Ltwo)} + \| \nabla \Delta \ueps\|_{L^2(\Ltwo)}+ 
				\| \phi\|_{X_\phi} \|\uteps\|_{L^2(\Lfour)}\\+ 
				\| \phi\|_{X_\phi} 
				\|\nabla \uteps\|_{L^2(\Ltwo)}+\|\nabla f\|_{L^2(\Ltwo)}.
			\end{multlined}
		\end{aligned}
	\end{equation}
	Thanks to assumption \eqref{assumption1}, we have the uniform bound $\|\frakKeps\|_{\calM(0,t)} \leq \cAone$.
Going back to \eqref{est_utt_interim} and using the estimate on $\nabla \utteps$ thus yields
		\begin{equation} \label{est_utt_final}
		\begin{aligned}
			&\intt(\utteps \, \D\aaa+2\nabla \utteps \cdot\nabla\aaa, \Delta \uteps)_{L^2} \ds \\
		\lesssim&\,  \begin{multlined}[t]  ( \|\phi\|_{X_\phi}+\|\phi\|_{X_\phi}^2+\|\phi\|_{X_\phi}^3) \Big\{ \| \Delta \ueps\|_{L^2(\Ltwo)}+   \|\D \uteps \|_{L^2(\Ltwo)}\\
			+ \| \frakKeps*\nabla \Delta \uteps\|_{L^2(\Ltwo)} + \|\uteps\|_{L^2(\Hone)}+\| f\|_{L^2(\Hone)}\Big\} \|\Delta \uteps\|_{L^2(\Ltwo)}.
				\end{multlined}
		\end{aligned}
	\end{equation}
By employing estimates \eqref{est_aaa_t}, \eqref{estimates_f}, and \eqref{est_utt_final}  in \eqref{energy_ineq_1}, we arrive at
		\begin{equation} \label{energy_ineq_2}
		\begin{aligned}
			&\frac12\nLtwo{\sqrt{\aaa} \Delta \uteps}^2 \Big \vert_0^t + \frac{c^2}{2} \nLtwo{\nabla\D \ueps}^2 \Big \vert_0^t +\cAtwo \int_0^{t} \|(\frakKeps*\nabla \D \uteps )(s)\|^2_{\Ltwo} \ds \\
			\lesssim &\,\begin{multlined}[t] (1+\|\phi_t\|_{X_\phi} ) \|\Delta \uteps\|^2_{L^2_t(\Ltwo)} +\|\nabla \Delta u_0\|^2_{\Ltwo}+ \|f\|^2_{{H^1(\Hone)}}\\
		+	( \|\phi\|_{X_\phi}+\|\phi\|_{X_\phi}^2+\|\phi\|_{X_\phi}^3) ^2\|\Delta \uteps\|^2_{L^2_t(\Ltwo)} \\+  \| \Delta \ueps\|^2_{L^2_t(\Ltwo)}+   \|\D \uteps \|^2_{L^2_t(\Ltwo)} + \|\uteps\|^2_{L^2_t(\Hone)} \\
		+	 ( \|\phi\|_{X_\phi}+\|\phi\|_{X_\phi}^2+\|\phi\|_{X_\phi}^3)\| \frakKeps*\nabla \Delta \uteps\|_{L^2_t(\Ltwo)}  \|\Delta \uteps\|_{L^2_t(\Ltwo)}.
						\end{multlined}
		\end{aligned}
	\end{equation}
	Other than the last term in \eqref{energy_ineq_2}, all other terms on the right-hand side can be tackled using Gr\"onwall's inequality. To treat the last term, we employ Young's inequality:
	\begin{equation} \label{est_Keps_gamma}
		\begin{aligned}
			&( \|\phi\|_{X_\phi}+\|\phi\|_{X_\phi}^2+\|\phi\|_{X_\phi}^3) \|  \frakKeps*\nabla \Delta \uteps\|_{L^2(\Ltwo)}  \|\Delta \uteps\|_{L^2(\Ltwo)} \\
			\leq&\,  \gamma	\| \nabla \Delta \frakKeps* \uteps\|^2_{L^2_t(\Ltwo)}+\frac{1}{4 \gamma } ( \|\phi\|_{X_\phi}+\|\phi\|_{X_\phi}^2+\|\phi\|_{X_\phi}^3)^2\|\Delta \uteps\|^2_{L^2_t(\Ltwo)} .
		\end{aligned}
	\end{equation}
If $\tilde{C}>0$ is the hidden constant within $\lesssim$ in \eqref{energy_ineq_2}, we can choose $\gamma$ as
	\[
	\gamma = \frac{1}{\tilde{C}}\cAtwo/2.
	\]
The term $\gamma	\| \nabla \Delta \frakKeps* \uteps\|^2_{L^2_t(\Ltwo)}$ can be absorbed by the left-hand side of \eqref{energy_ineq_2}. 
We then use Gr\"onwall's inequality for the second term on the right-hand side of \eqref{est_Keps_gamma} to arrive at an $\eps$-uniform estimate. \\
	\indent Altogether, starting from estimates \eqref{energy_ineq_2} and \eqref{est_Keps_gamma}, an application of Gr\"onwall's inequality leads to the final estimate \eqref{final_est_lin}, at first in a semi-discrete setting.  The bound transfers to the continuous setting through standard weak compactness arguments, analogously to, for example, \cite[Proposition 3.1]{kaltenbacher2022parabolic}. Uniqueness of the constructed solution can be established by testing the homogeneous problem (where $f=u_0=u_1=0$) with, for example, $\uteps$. We omit these details here. 
\end{proof}
\subsection{Uniform lower-order estimate}\label{Sec:Lower_order_est}
\indent In the fixed-point analysis, we will rely on Agmon's interpolation inequality to prove that the leading factor $\aaa(\ueps)=1+2k\ueps$ of the nonlinear equation does not degenerate. For $v \in \Htwo$, the inequality is given by
\begin{equation}\label{unif_bound_energynorm}
	\begin{aligned}
		\|v\|_{\Linf} \leq C_{\textup{A}}\|v\|_{\Ltwo}^{1-d/4}\|v\|^{d/4}_{\Htwo} \quad  (d \leq 3),
	\end{aligned}
\end{equation}
see~\cite[Lemma 13.2, Ch.\ 13]{agmon2010lectures}.  Using this estimate on $v=\ueps(t)$ together with a bound on $\|\ueps(t)\|_{\Ltwo}$ in terms of data in lower-order topology, will allow us to impose data smallness in that topology instead $\Honethree \times \Honetwo$.  We derive this bound next. \\
\indent Under the assumptions of Proposition~\ref{Prop:WellP_Lin}, testing \eqref{ibvp_West_general_lin} with $\uteps$ and using coercivity assumption \eqref{assumption2}  leads to the inequality                                                                                                                                                                                                                                                                                                                                                                                                                                                                                                                                                                                                     
\begin{equation} \label{lin_lower_bound}
	\begin{aligned}
		&\begin{multlined}[t]\frac12 \|\sqrt{\aaa}\uteps(t)\|^2_{\Ltwo} \Big \vert_0^t +\frac{c^2}{2}\|\nabla \ueps(t)\|^2_{\Ltwo}  \Big \vert_0^t+\cAtwo \int_0^{t} \|(\frakKeps*\nabla  \uteps )(s)\|^2_{\Ltwo} \ds 
			\end{multlined}
			\\
		\leq &              \frac12 \int_0^t (\aaa_t \uteps, \uteps)_{L^2}\ds+\int_0^t(f, \uteps)_{L^2}\ds\\
		\lesssim &\, \|\phi_t\|_{L^\infty(\Linf)}\|\uteps\|^2_{L^2_t(\Ltwo)}+\|f\|_{L^1(\Ltwo)}\|\uteps\|_{L^\infty_t(\Ltwo)},
	\end{aligned}
\end{equation}
where the hidden constant does not depend on $\eps$. From here by using Young's and Gr\"onwall's inequalities, we obtain the following lower-order counterpart of \eqref{final_est_lin}:
\begin{equation} \label{unif_est_lowerorder}
	\begin{aligned}
	&\|\uteps\|^2_{L^\infty(\Ltwo)}+ \|\ueps\|^2_{L^\infty(\Hone)} \\
	\leq&\, C_3\exp(C_4(1+ \|\phi\|_{X_\phi})T))(\|u_0\|^2_{\Hone}+\|u_1\|^2_{\Ltwo}+\|f\|^2_{L^1(\Ltwo)}),
	\end{aligned}
\end{equation}
for some $C_3$, $C_4>0$ that do not depend on $\eps$. We we will use this bound in the fixed-point proof to ensure the smallness of $\|\ueps\|_{L^\infty(\Ltwo)} \lesssim \|\ueps\|_{L^\infty(\Hone)}$.

\subsection{Fixed-point analysis} We proceed with the analysis of the nonlinear problem, where we will combine our previous results with a fixed-point argument.
\begin{theorem}\label{Thm:Wellp_2ndorder_nonlocal}
	Let ${k} \in \R$ and $\eps \in (0, \beps)$. Let $(u_0, u_1) \in \Honethree \times \Honetwo$ be such that
	\begin{equation}\label{initregWellp_2ndorder_nonlocal}
		\|u_0\|^2_{\Hthree}+\|u_1\|^2_{\Htwo}+\|f\|^2_{H^1(\Hone)} \leq r^2,
	\end{equation}
	where $r$ does not depend on $\varepsilon$. 
	Let assumptions \eqref{assumption1} and \eqref{assumption2} on the kernel $\frakKeps$ hold
	and let $f \in H^1(0,T; \Honezero)$. Then there exist data size
	$r_0=r_0 (r)>0$ and final time ${T}={T}(r)$, both independent of $\eps$, such that if  
	\begin{equation}\label{initsmallWellp_2ndorder_nonlocal}
		\|u_0\|^2_{\Hone}+\|u_1\|^2_{\Ltwo}+\|f\|^2_{L^1(\Ltwo)} \leq {r}_0^2,
	\end{equation}
	then there is a unique solution $\ueps \in  \calU$ of \eqref{ibvp_West_general}, with $\calU$ defined in \eqref{regularity}, which satisfies the following estimate uniformly in $\varepsilon$: 
	\begin{equation} \label{Nonlin:Main_energy_est}
		\begin{aligned}
		\|\ueps \|^2_{\calU} \leq C(T)\left( \|u_0\|_{\Hthree}^2+\|u_1\|^2_{\Htwo}+\|f\|^2_{H^1(\Hone)} \right).
		\end{aligned}
	\end{equation}  
\end{theorem}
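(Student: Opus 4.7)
The plan is to apply the Banach fixed-point theorem to the map $\TK: \phi \mapsto \ueps$ defined via the linear problem~\eqref{ibvp_West_general_lin}, carefully tracking all constants to preserve $\varepsilon$-uniformity. For a radius $R > 0$ to be chosen, set
\[
\calB_R = \bigl\{ \phi \in X_\phi : \phi(0) = u_0,\ \phi_t(0) = u_1,\ \|\phi\|_{X_\phi} \leq R \bigr\}.
\]
The first task is to verify the non-degeneracy hypothesis~\eqref{non-deg_phi} for every $\phi \in \calB_R$. Applying Agmon's inequality~\eqref{unif_bound_energynorm} pointwise in $t$,
\[
\|\phi\|_{L^\infty(L^\infty)} \leq C_A \|\phi\|_{L^\infty(\Ltwo)}^{1-d/4}\, \|\phi\|_{L^\infty(\Htwo)}^{d/4},
\]
and noting that $\phi(0) = u_0$ together with $\|\phi_t\|_{L^\infty(\Ltwo)} \leq R$ gives $\|\phi\|_{L^\infty(\Ltwo)} \leq \|u_0\|_{\Ltwo} + T R \leq r_0 + T R$. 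Choosing $r_0$ and $T$ sufficiently small therefore forces $2|{k}|\, \|\phi\|_{L^\infty(L^\infty)} \leq 1/2$, so that $\ulal = 1/2 \leq \aaa(\phi) \leq 3/2 = \olal$ with thresholds independent of $\varepsilon$.

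With non-degeneracy secured, Proposition~\ref{Prop:WellP_Lin} supplies a unique $\ueps \in \calU$ solving the linear problem with
\[
\|\ueps\|_{\calU}^2 \leq C_1 \exp\bigl(C_2 (1 + R + \ldots + R^6)\, T\bigr)\, r^2.
\]
Fixing $R$ so that $R^2 \geq 4 C_1 r^2$ and then choosing $T = T(r, R)$ small enough that the exponential factor remains below $2$ yields $\|\ueps\|_{X_\phi} \leq \|\ueps\|_{\calU} \leq R$, so $\ueps \in \calB_R$. Because the constants from Proposition~\ref{Prop:WellP_Lin} are $\varepsilon$-independent under assumptions~\eqref{assumption1}--\eqref{assumption2}, so is the threshold for $T$.

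Next I would prove contractivity of $\TK$ on $\calB_R$ in the weaker norm $Z := W^{1,\infty}(0,T; \Ltwo) \cap L^\infty(0,T; \Honezero)$. For $\phi_1, \phi_2 \in \calB_R$ with images $u_1^\varepsilon := \TK \phi_1$ and $u_2^\varepsilon := \TK \phi_2$, the difference $w = u_1^\varepsilon - u_2^\varepsilon$ satisfies
\[
(\aaa(\phi_1) w_t)_t - c^2 \Delta w - \frakKeps * \Delta w_t = -\bigl( 2 k (\phi_1 - \phi_2)\, u_{2,t}^\varepsilon \bigr)_t
\]
with zero initial and boundary data. Testing with $w_t$, mimicking the lower-order argument leading to~\eqref{lin_lower_bound} and invoking coercivity~\eqref{assumption2}, and estimating the right-hand side by H\"older's inequality using the uniform bound $\|u_2^\varepsilon\|_{\calU} \leq R$, one obtains an estimate of the form $\|w\|_Z \leq C(R)\, T^{1/2}\, \|\phi_1 - \phi_2\|_Z$. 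A further shrinking of $T$ then makes $\TK$ a strict contraction in $Z$. The closed ball $\calB_R$ is $Z$-complete because its $X_\phi$-bound passes to $Z$-convergent limits of sequences in $\calB_R$ (by lower semicontinuity with respect to weak-$*$ convergence), so Banach's theorem delivers a unique fixed point $\ueps \in \calB_R \cap \calU$ solving~\eqref{ibvp_West_general} and inheriting estimate~\eqref{Nonlin:Main_energy_est} from the linear bound.

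The main obstacle I expect is the simultaneous coordination of three threshold mechanisms: $(i)$ the Agmon-based non-degeneracy ceiling, which forces smallness of $r_0$ and of $T$; $(ii)$ the exponential-in-$T$ growth of the linear constant of Proposition~\ref{Prop:WellP_Lin}, which limits $R$ through $T$; and $(iii)$ the contractivity gain, which once more requires $T$ small. The consistent ordering is to fix $R$ from $r$ via~$(ii)$, then select $T$ small enough for $(ii)$ and $(iii)$, and finally tighten $r_0$ to enforce~$(i)$; since all constants in Proposition~\ref{Prop:WellP_Lin}, in the lower-order bound~\eqref{unif_est_lowerorder}, and in~\eqref{assumption1}--\eqref{assumption2} are $\varepsilon$-independent, the resulting thresholds $r_0$ and $T$ depend only on $r$.
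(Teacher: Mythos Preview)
Your proposal follows the same Banach fixed-point strategy as the paper: linearize, solve via Proposition~\ref{Prop:WellP_Lin}, and prove contractivity in the lower-order energy norm $Z = W^{1,\infty}(0,T;\Ltwo)\cap L^\infty(0,T;\Honezero)$. The one substantive difference is in how non-degeneracy of $\aaa(\phi)$ is secured. The paper builds the constraint $4|k|\,\|\phi\|_{L^\infty(\Linf)}\leq 1$ into the definition of the ball $\calB$ (so non-degeneracy is automatic for inputs) and then verifies it for the \emph{image} $\ueps=\TK\phi$ via Agmon's inequality combined with the lower-order PDE estimate~\eqref{unif_est_lowerorder}. You instead omit that constraint from the ball and verify non-degeneracy directly for every $\phi\in\calB_R$ through the elementary bound $\|\phi\|_{L^\infty(\Ltwo)}\leq \|u_0\|_{\Ltwo}+T R$ together with Agmon. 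Your route is a little more economical in that it does not call on the PDE estimate for this step; the paper's route has the advantage that the ball lives in $\calU$ (rather than $X_\phi$), where the trace $\phi_t(0)=u_1$ is unambiguously defined and the closedness argument in the $Z$-topology goes through cleanly.

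One small correction to your ordering summary: constraint (i) requires $r_0+TR$ to be small, so the selection of $T$ in step~2 must already accommodate (i), not only (ii) and (iii); only after $T$ has been reduced enough to make $TR$ small can $r_0$ be tightened to finish the job.
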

\begin{proof}
	The proof follows by using the Banach fixed-point theorem on $\TK$, in the general spirit of~\cite[Theorem 6.1]{kaltenbacher2022parabolic} which considers uniform analysis in a local-in-time setting.  The mapping $\TK:\phi \mapsto \ueps$ is defined on the ball
\begin{equation}
	\begin{aligned}
		\phi \in \calB= \Big \{ \phi \in \calU:&\  \|\phi\|_{\calU} \leq R, \ 4|k|\|\phi\|_{L^\infty(\Linf)} \leq 1, 
		\, (\phi, \phi_t) \vert_{t=0} = (u_0, u_1) \Big \}
	\end{aligned}
\end{equation}
	 with a large enough radius $R>0$, which will be made precise below. The set $\mathcal{B} $ is non-empty as the solution of the linear problem with $k=0$ belongs to it if $R$ is sufficiently large, relative to $r$.  \\
\indent Let $\phi \in \mathcal{B}$.  Then the smoothness assumption on $\phi$ in Proposition~\ref{Prop:WellP_Lin} holds since $\calU \subset X_\phi$.
	Furthermore, since $4|k|\|\phi\|_{L^\infty(\Linf)} \leq 1$,  $\aaa$ does not degenerate:
	\[
	\begin{aligned}
		\frac12=	\underline{\aaa} \leq \aaa=1+2k\phi(x,t) \leq \overline{\aaa}=\frac32,  \quad (x,t)\in \Omega \times (0,T).
	\end{aligned}
	\]
	
	\emph{${\bf(1)}$ The self-mapping property.}  We wish to prove that $\ueps:=\TK \phi  \in \calB$. Since the assumptions of Proposition~\ref{Prop:WellP_Lin} hold, we know that $\ueps$ satisfies
	\begin{equation}
		\begin{aligned}
		\|u^\eps\|^2_{\calU} \leq	C_1 \exp( C_2(1+\|\phi\|_{X_\phi}+\ldots+\|\phi\|^6_{X_\phi}) T) (\|u_0\|^2_{\Hthree}+\|u_1\|^2_{\Htwo}+\|f\|^2_{H^1(\Hone)}).
		\end{aligned}
	\end{equation}
We can guarantee that $\|\ueps\|_{\calU} \leq R$ by choosing $R=R(r)$ and $T= T(r)$ so that
	\begin{equation}
		\begin{aligned}
		C_1 \exp( C_2(1+R+\ldots+R^6) T)	r^2 \leq R^2.
		\end{aligned}
	\end{equation}
	\indent We next show that the bound $4{|k|}\|\ueps\|_{L^\infty(\Linf)} \leq 1$  holds. To this end, we rely on Agmon's interpolation inequality \eqref{unif_bound_energynorm}
and combine it with the uniform lower-order bound in \eqref{unif_est_lowerorder}.  We have
\[
\|\ueps\|_{L^\infty(\Htwo)} \lesssim \|\ueps\|_{\calU} \lesssim R,
\]
and so using Agmon's inequality leads to
	\begin{align} \label{est_Agmon_p}
	&\|\ueps\|_{L^\infty(\Linf)} \\
		\leq&\, C_{\textup{A}}	\|\ueps\|^{1-d/4}_{L^\infty(\Ltwo)} 	\|\ueps\|^{d/4}_{L^\infty(\Htwo)} \\ 
		\leq&\, C_{\textup{A}}\left\{C_3\exp(C_4T(1+R))(\|u_0\|^2_{\Hone}+\|u_1\|^2_{\Ltwo}+\|f\|^2_{L^1(\Ltwo)})\right\}^{1/2-{d}/8}R^{{d}/4}.
	\end{align}
	We then choose data size ${r}_0=r_0(r)>0$ small enough, so that
	\[
	4|k| C_{\textup{A}}\left\{C_3\exp(C_4T(1+R)) r_0^2 \right\}^{1/2-{d}/8}R^{{d}/4}\leq 1.
	\]
Then $4|k| \|\ueps\|_{L^\infty(\Linf)}  \leq 1$, which was the last condition needed to conclude that $\ueps \in \calB$.\\

	\emph{${\bf (2)}$ Strict contractivity.} 
	 Take $\phi^{(1)}$ and $\phi^{(2)}$ in $\mathcal{B}$, and denote their difference by $\overline{\phi}= \phi^{(1)} -\phi^{(2)} $. Let $u^{\eps, (1)}=\TK (\phi^{(1)})$ and $u^{\eps, (2)}=\TK (\phi^{(2)})$. Their difference $\overline{u}=u^{\eps, (1)} -u^{\eps, (2)} \in \mathcal{B}$ then solves 
	\begin{equation} \label{2ndorder_nonlocal_diff_contr}
		\begin{aligned}
			(1+ 2{k} \phi^{(1)})\overline{u}_{tt}-c^2 \Delta \overline{u}-    \frakKeps*\D \overline{u}_t+2k \phi^{(1)}_t \bar{u}_t 
			=\, \begin{multlined}[t] -2k \overline{\phi} u_{tt}^{\eps, (2)} -2k \overline{\phi}_t u_t^{\eps, (2)} \end{multlined}
		\end{aligned}
	\end{equation}
	with zero initial and boundary data. Since the right-hand side of the above equation does not belong to $H^1(0,T; \Hone)$, we cannot use estimate \eqref{final_est_lin} to prove contractivity of the mapping  $\mathcal{T}$ with respect to $\|\cdot\|_{\calU}$. Instead, we prove that the mapping $\mathcal{T}$ is strictly contractive in the space
		\begin{equation} \label{def_space_XE}
			X_{\textup{E}}:=	W^{1,\infty}(0,T; \Ltwo) \cap L^\infty(0,T; \Honezero),
		\end{equation}
		endowed with the norm defined in \eqref{energy_norm}. We can again use the uniform lower-order bound established in \eqref{unif_est_lowerorder} to conclude that
	\begin{equation}
		\begin{aligned}
		&\|\overline{u}_t\|_{L^\infty(\Ltwo)}+ \|\overline{u}\|_{L^\infty(\Hone)} \\
		\lesssim&\, \|-2k \overline{\phi} u_{tt}^{\eps, (2)} -2k \overline{\phi}_t u_t^{\eps, (2)}\|_{L^1(\Ltwo)}\\
		\lesssim&\,|k| \|\overline{\phi}\|_{L^2(\Lfour)}\|u_{tt}^{\eps, (2)} \|_{L^2(\Lfour)} + |k|\|\overline{\phi}_t\|_{L^2(\Ltwo)}\|u_t^{\eps, (2)}\|_{L^2(\Linf)}.
		\end{aligned}
	\end{equation}
	By using the embeddings $\Hone \hookrightarrow \Lfour$ and $\Htwo \hookrightarrow \Linf$ and the fact that $u^{\eps, (2)} \in \calB$, we further infer that
	\begin{equation}
		\begin{aligned}
		\|\overline{u}_t\|_{L^\infty(\Ltwo)}+ \|\overline{u}\|_{L^\infty(\Hone)} 
		 \lesssim&\, |k| (\|\overline{\phi}\|_{L^2(\Hone)}+\|\overline{\phi}_t\|_{L^2(\Ltwo)})\\
		 \lesssim&\,  |k|\sqrt{T}(\|\overline{\phi}\|_{L^\infty(\Hone)}+\|\overline{\phi}_t\|_{L^\infty(\Ltwo)}).
		\end{aligned}
	\end{equation}
	Therefore, by reducing ${{T}}$ if needed (independently of $\eps$), 
	we can ensure that the mapping $\mathcal{T}$ is strictly contractive with respect to the energy norm \eqref{energy_norm}. \\
	\indent The arguments showing that $\calB$ is closed with respect to this norm are analogous to those of \cite[Theorem 4.1]{kaltenbacher2022parabolic}. By the Banach fixed-point theorem, we therefore obtain a unique fixed point $\ueps=\mathcal{T}(\ueps)$ in $\calB$, which solves the nonlinear problem. 
\end{proof}

We note that as a byproduct of Theorem~\ref{Thm:Wellp_2ndorder_nonlocal}, we obtain local well-posedness for the strongly damped ($\frakKeps=\eps \delta_0$), inviscid ($\frakKeps \equiv 0$), and fractionally damped ($\frakKeps=\eps g_\alpha$) Westervelt equations, therefore generalizing the results of~\cite{dorfler2016local,kaltenbacher2022inverse,kaltenbacher2022parabolic}.

\section{Establishing the limiting behavior for relevant classes of kernels} \label{Sec:LimitingAnalysis}
We next wish to determine the limiting behavior of solutions to \eqref{West_general} as $\eps \searrow 0$. 
We begin by proving (H\"older) continuity of the solution with respect to the memory kernel. 
\begin{theorem} \label{Thm:Limit}
Let $\eps_1, \eps_2 \in (0, \beps)$. Under the assumptions Theorem~\ref{Thm:Wellp_2ndorder_nonlocal}, the following estimates hold: 
\begin{equation} \label{continuity_kernels}
	\|u^{\eps_1}-u^{\eps_2}\|_{\textup{E}}\lesssim 
			\|(\frakK_{\eps_1}-\frakK_{\eps_2})\Lconv1\|^{1/2}_{L^1(0,T)},
\end{equation}
and
\begin{equation} \label{continuity_kernels_L1}
	\|u^{\eps_1}-u^{\eps_2}\|_{\textup{E}}\lesssim  \|\frakK_{\eps_1}-\frakK_{\eps_2}\|_{\calM(0,T)}. 
\end{equation}

\end{theorem}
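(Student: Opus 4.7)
The plan is to subtract the two instances of \eqref{ibvp_West_general} for $u^{\eps_1}$ and $u^{\eps_2}$ and derive an energy estimate in the $\textup{E}$-norm for the difference $\bar u := u^{\eps_1} - u^{\eps_2}$. Setting $\bar\frakK := \frakK_{\eps_1} - \frakK_{\eps_2}$, this difference satisfies
\begin{equation*}
\bigl((1+2ku^{\eps_1})\bar u_t\bigr)_t - c^2 \Delta \bar u - \frakK_{\eps_1}*\Delta \bar u_t
= -2k\,(\bar u\, u_t^{\eps_2})_t + \bar\frakK*\Delta u_t^{\eps_2},
\end{equation*}
with vanishing initial and Dirichlet boundary data. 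The natural test function, as in the lower-order identity leading to \eqref{unif_est_lowerorder}, is $\bar u_t$, and the same argument structure will be followed; the essential new point is controlling the kernel source term $\bar\frakK*\Delta u_t^{\eps_2}$ in two different ways, one for each estimate.

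After spatial integration by parts and applying coercivity \eqref{assumption2} to the $\frakK_{\eps_1}$-term on the left, I arrive at an inequality of the shape
\begin{equation*}
\tfrac12\|\sqrt{1+2ku^{\eps_1}}\,\bar u_t(t)\|^2_{\Ltwo} + \tfrac{c^2}{2}\|\nabla\bar u(t)\|^2_{\Ltwo}
\lesssim \int_0^t \|\bar u_t\|_{\Ltwo}^2\, ds + J(t) + R(t),
\end{equation*}
with $J(t) := -2k\int_0^t((\bar u\, u_t^{\eps_2})_t,\bar u_t)_{L^2}\,ds$, $R(t) := \int_0^t(\bar\frakK*\Delta u_t^{\eps_2},\bar u_t)_{L^2}\,ds$, and implicit constants depending only on the uniform bound $\|u^{\eps_i}\|_\calU\le C$ from Theorem~\ref{Thm:Wellp_2ndorder_nonlocal}. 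Using $\Hone\hookrightarrow\Lfour$, $\Htwo\hookrightarrow\Linf$ and $\|\bar u_t\|_{L^2(\Ltwo)}\le\sqrt T\,\|\bar u\|_{\textup E}$, I expect $|J(t)|\lesssim\sqrt T\,\|\bar u\|_{\textup E}^2$, which is absorbed for small $T$; the residual integral is closed by Gr\"onwall.

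For \eqref{continuity_kernels_L1}, I estimate $R$ directly via Young's convolution inequality,
\begin{equation*}
|R(t)| \le \|\bar\frakK\|_{\calM(0,T)}\,\|\Delta u_t^{\eps_2}\|_{L^2(\Ltwo)}\,\|\bar u_t\|_{L^2_t(\Ltwo)} \lesssim \|\bar\frakK\|_{\calM(0,T)}\,\|\bar u\|_{\textup E},
\end{equation*}
and then absorb $\|\bar u\|_{\textup E}$ with Young's inequality: $|R(t)| \le \tfrac14\|\bar u\|_{\textup E}^2 + C\|\bar\frakK\|^2_{\calM(0,T)}$. Gr\"onwall then yields $\|\bar u\|_{\textup E}^2\lesssim\|\bar\frakK\|^2_{\calM(0,T)}$, which is \eqref{continuity_kernels_L1}.

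The (weaker) estimate \eqref{continuity_kernels} requires replacing $\bar\frakK$ by its primitive $\tilde K(t):=(\bar\frakK*1)(t)$. Since $\tilde K(0)=0$ and $\bar\frakK = \tilde K'$ distributionally, a time integration by parts in the convolution produces the identity
\begin{equation*}
\bar\frakK*\Delta u_t^{\eps_2} = \tilde K(t)\,\Delta u_1 + \tilde K*\Delta u_{tt}^{\eps_2}.
\end{equation*}
After a further spatial integration by parts in $R(t)$, both resulting contributions will be bounded by $C\|\tilde K\|_{L^1(0,T)}$ using the uniform $L^2(\Ltwo)$-bound on $\nabla u_{tt}^{\eps_2}$ and the uniform $L^\infty(\Ltwo)$-bound on $\nabla\bar u_t$ that $\calU$-regularity supplies. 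A final application of Gr\"onwall yields $\|\bar u\|_{\textup E}^2 \lesssim \|\tilde K\|_{L^1(0,T)}$, i.e.\ \eqref{continuity_kernels}. The main challenge is precisely this conversion step: a direct bound on $R(t)$ in terms of $\|\bar\frakK\|_{L^1}$ cannot see the cancellation hidden in $\tilde K$, and trading one time derivative on $u^{\eps_2}$ for integration on the kernel side relies on $\calU$-regularity being available uniformly in $\eps$.
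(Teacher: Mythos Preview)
Your proposal is correct and follows essentially the same approach as the paper: test the difference equation with $\bar u_t$, handle the coupling term $J$ via the contractivity/lower-order argument, and treat $R(t)$ in two ways---directly via Young's convolution inequality for \eqref{continuity_kernels_L1}, and via the identity $\bar\frakK*\Delta u_t^{\eps_2}=\tilde K(t)\Delta u_1+\tilde K*\Delta u_{tt}^{\eps_2}$ followed by spatial integration by parts for \eqref{continuity_kernels}, using the uniform $\calU$-bound on $\nabla\bar u_t$ (rather than absorption) to close. The paper makes the same key observation that $\|\nabla\bar u_t\|_{L^\infty_t(L^2)}$ cannot be absorbed by the energy norm and must instead be bounded a priori, which is exactly your final remark.
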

\begin{proof}
To prove the statement, we can see the difference $\overline{u}=u^{\eps_1}-u^{\eps_2}$ as the solution to 
	\begin{equation} \label{deltaLim_diff_eq}
		\begin{aligned}
			&\begin{multlined}[t]
				((1+ 2{k} u^{\eps_1} )\overline{u}_t)_t-c^2 \Delta \overline{u}-  \frakK_{\eps_1}*\D \overline{u}_t + 2k (\overline{u} u_t^{\eps_2})_t \end{multlined}
			= \,  (\frakK_{\eps_1}-\frakK_{\eps_2})* \Delta u_t^{\eps_2}
		\end{aligned}
	\end{equation}
with zero initial and boundary conditions. We can test this equation with $\overline{u}_t$ and proceed similarly to the proof of contractivity in Theorem~\ref{Thm:Wellp_2ndorder_nonlocal} with now
	$
	\aaa=1+2{k}  u^{\eps_1}$. The new term compared to before is the convolution on the right-hand side. After testing, this term can be handled using Young's convolution inequality as follows:
	\begin{equation} \label{diff_kernel_L1}
	\begin{aligned}
&\inttO	(\frakK_{\eps_1}-\frakK_{\eps_2})* \Delta u_t^{\eps_2}\overline{u}_t \dxs \\
\lesssim&\, \|\frakK_{\eps_1}-\frakK_{\eps_2}\|_{\calM(0,T)} \|\Delta u_t^{\eps_2}\|_{L^2(\Ltwo)}\|\overline{u}_t\|_{L^2(\Ltwo)}\\
 \lesssim&\, \|\frakK_{\eps_1}-\frakK_{\eps_2}\|^2_{\calM(0,T)} \|\Delta u_t^{\eps_2}\|^2_{L^2(\Ltwo)}+\|\overline{u}_t\|^2_{L^2(\Ltwo)}.
\end{aligned}
	\end{equation}
	Since by Theorem~\ref{Thm:Wellp_2ndorder_nonlocal}, $\|\Delta u_t^{\eps_2}\|_{L^2(\Ltwo)} \lesssim \|u^{\eps_2}\|_\calU$ is uniformly bounded, we obtain the claimed estimate by relying on Gr\"onwall's inequality. \\
	\indent The difference of kernels term can also be treated using integration by parts in space:
	\begin{equation}\label{psitaudiff}
		\begin{aligned}
			&\int_0^t (((\mathfrak{K}_{\eps_1}-\mathfrak{K}_{\eps_2})* \Delta \ut^{\eps_2})(s), \overline{u}_t(s))_{L^2}\ds \\
				=&\, \int_0^t \Big \{ ((1*(\mathfrak{K}_{\eps_1}-\mathfrak{K}_{\eps_2})*\Delta u_{tt}^{\eps_2})(s), \overline{u}_t(s))_{L^2}+((1*(\mathfrak{K}_{\eps_1}-\mathfrak{K}_{\eps_2}))(s)\Delta u_1, \overline{u}_t(s))_{L^2} \Big\} \ds \\
			=&\, \int_0^t \Big \{-((1*(\mathfrak{K}_{\eps_1}-\mathfrak{K}_{\eps_2})*\nabla u_{tt}^{\eps_2})(s), \nabla \overline{u}_t(s))_{L^2}+((1*(\mathfrak{K}_{\eps_1}-\mathfrak{K}_{\eps_2}))(s)\Delta u_1, \overline{u}_t(s))_{L^2} \Big\}\ds .
			\end{aligned}
	\end{equation}
	Then by Young's convolution inequality
		\begin{equation}
	\begin{aligned}
	&\int_0^t (((\mathfrak{K}_{\eps_1}-\mathfrak{K}_{\eps_2})* \Delta \ut^{\eps_2})(s), \overline{u}_t(s))_{L^2}\ds \\
	\leq&\,\begin{multlined}[t]  \|(\mathfrak{K}_{\eps_1}-\mathfrak{K}_{\eps_2})*1\|_{L^1(0,T)} \left(\|\nabla \utt^{\eps_2}\|_{L^1_t(\Ltwo)}\|\nabla \overline{u}_t\|_{L^\infty_t(\Ltwo)}+
	\|\Delta u_1\|_{\Ltwo}\|\overline{u}_t\|_{L^1_t(\Ltwo)} \right).\end{multlined}
	\end{aligned}
	\end{equation}
	Note that we would not be able to absorb the term $\|\nabla \overline{u}_t\|^2_{L^\infty_t(\Ltwo)}$ by the left-hand side (i.e., the energy norm). However, by  Theorem~\ref{Thm:Wellp_2ndorder_nonlocal}, we know that the following uniform bound holds:
	\[
	\|\nabla \utt^{\eps_2}\|_{L^1_t(\Ltwo)}\|\nabla \overline{u}_t\|_{L^\infty_t(\Ltwo)}+
	\|\Delta u_1\|_{\Ltwo}\|\overline{u}_t\|_{L^1_t(\Ltwo)}  \leq C.
	\]
	Proceeding otherwise as in the proof of contractivity in Theorem~\ref{Thm:Wellp_2ndorder_nonlocal}, we therefore obtain
	\begin{equation}\label{psitaudiff_E}
		\|u^{\eps_1}-u^{\eps_2}\|^2_{\textup{E}}\leq C  \|(\mathfrak{K}_{\eps_1}-\mathfrak{K}_{\eps_2})*1\|_{L^1(0,T)}
	\end{equation}
	with a constant $C>0$ that is independent of $\eps_{1,2}$, from which the claim follows.
\end{proof}
Observe that the main culprit for the reduced order of continuity in \eqref{continuity_kernels} (i.e., having $\|(\frakK_{\eps_1}-\frakK_{\eps_2})\Lconv1\|^{1/2}_{L^1(0,T)}$ instead of $\|(\frakK_{\eps_1}-\frakK_{\eps_2})\Lconv1\|_{L^1(0,T)}$) is the integration by parts with respect to space in \eqref{psitaudiff}. This approach was forced by a lack of uniform bound on $\Delta \utt^{\eps_2}$. We thus expect that the order can be improved in a more regular setting in terms of data that would lead to a uniform estimate on $\Delta \utt^{\eps}$ in \eqref{ibvp_West_general}. \\
\indent Theorem~\ref{Thm:Limit} is the key to establishing the limiting behavior of solutions to \eqref{ibvp_West_general} as $\eps \searrow 0$ and, in particular, the convergence rates. As they will depend on the form of the kernel $\frakKeps$ and, in turn, its dependence on $\varepsilon$, we treat different classes of kernels separately. 
\subsection{The vanishing sound diffusivity limit with fractional-type kernels}  \label{Sec:Limit_delta}
We first discuss the setting $\frakKeps=\eps \frakK$. Recall that a representative example of this class of kernels (up to a constant) is 
\begin{equation}
	\frakKeps=\eps   g_{\alpha},
\end{equation}
where the Abel kernel is defined in \eqref{def_galpha} for $\alpha \in (0,1)$ and $g_0= \delta_0$. We will prove in Section~\ref{sec:verification} that this kernel indeed verifies assumption \eqref{assumption2}. 
\begin{corollary}\label{Corollary:Limit_epsK}
	Under the assumptions of Theorem~\ref{Thm:Wellp_2ndorder_nonlocal} with the kernel
	\[
	\frakKeps = \eps \frakK, \quad \eps \in (0, \beps)
	\] 
	satisfying assumptions \eqref{assumption1} and \eqref{assumption2}, the family of solutions $\{u^{\eps}\}_{\eps \in (0, \beps)}$ of \eqref{ibvp_West_general}  converges in the energy norm to the solution $u \equiv u^0$ of the initial boundary-value problem for the inviscid Westervelt equation
	\begin{equation}\label{ibvp_West_limit}
		\left \{	\begin{aligned} 
			&((1+2ku)\ut)_t-c^2 \Delta u  = f \quad  &&\text{in } \Omega \times (0,T), \\
			&u =0 \quad  &&\text{on } \partial \Omega \times (0,T),\\
			&(u, \ut)=(u_0, u_1), \quad  &&\text{in }  \Omega \times \{0\},
		\end{aligned} \right.
	\end{equation}
 at a linear rate
 \[
 \|u-\ueps\|_{\textup{E}} \lesssim \eps.
 \]
\end{corollary}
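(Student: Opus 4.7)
The plan is to apply Theorem~\ref{Thm:Limit} directly with $\eps_1 = \eps$ and $\eps_2 = 0$, after verifying that the inviscid Westervelt equation \eqref{ibvp_West_limit} may itself be viewed as a (degenerate) member of the family \eqref{ibvp_West_general}. With this observation in hand, the corollary is essentially a one-line application of the second stability bound in Theorem~\ref{Thm:Limit}.

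First I would set $\frakK_0 \equiv 0$ and check that assumptions \eqref{assumption1} and \eqref{assumption2} are trivially satisfied (with, say, $\cAone = 0$ and arbitrary $\cAtwo > 0$, since the convolution term vanishes identically). Under the hypotheses of Theorem~\ref{Thm:Wellp_2ndorder_nonlocal}, this provides, uniformly in the parameter and with the same data size $r_0$ and final time $T$, a unique solution $u \equiv u^0 \in \calU$ to \eqref{ibvp_West_limit}, belonging to the same ball $\calB$ that contains each $\ueps$. Hence both $\ueps$ and $u^0$ are admissible inputs to Theorem~\ref{Thm:Limit}.

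Next, applying \eqref{continuity_kernels_L1} with $\frakK_{\eps_1} = \eps \frakK$ and $\frakK_{\eps_2} = 0$ gives
\[
\|\ueps - u\|_{\textup{E}} \,\lesssim\, \|\eps\frakK - 0\|_{\calM(0,T)} \,=\, \eps\,\|\frakK\|_{\calM(0,T)}.
\]
Since $\frakK$ is fixed and satisfies $\frakK \in L^1(0,T) \cup \{\delta_0\}$, the factor $\|\frakK\|_{\calM(0,T)}$ is a finite constant independent of $\eps$, yielding the claimed linear rate $\|u - \ueps\|_{\textup{E}} \lesssim \eps$.

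The only nontrivial point—and the step I would be most careful with—is ensuring that the implicit constant in Theorem~\ref{Thm:Limit} remains bounded as the second kernel degenerates to zero. Tracing through the proof of Theorem~\ref{Thm:Limit}, this constant depends only on $\|\phi\|_{X_\phi}$-type quantities, the Grönwall factor associated with $\aaa = 1 + 2ku^{\eps_1}$, and the uniform bound $\|u^{\eps_2}\|_{\calU} \leq R$; all three are controlled uniformly across the entire family $\{u^{\eps}\}_{\eps \in [0,\beps)}$ by Theorem~\ref{Thm:Wellp_2ndorder_nonlocal}. Thus no blow-up occurs in the limit $\eps_2 \searrow 0$, and the corollary follows.
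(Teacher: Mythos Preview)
Your proof is correct and follows essentially the same approach as the paper: set the limiting kernel $\frakK_0\equiv 0$, observe that it trivially satisfies \eqref{assumption1} and \eqref{assumption2}, and apply the second stability estimate \eqref{continuity_kernels_L1} from Theorem~\ref{Thm:Limit} to obtain $\|\ueps-u\|_{\textup{E}}\lesssim \eps\,\|\frakK\|_{\calM(0,T)}$. Your additional care in checking that the implicit constant in Theorem~\ref{Thm:Limit} remains uniform at $\eps_2=0$ is appropriate and goes slightly beyond what the paper makes explicit.
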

\begin{proof}
In this setting, the limiting kernel  is $\frakK_0=0$, and it satisfies assumptions \eqref{assumption1} and \eqref{assumption2}. By Theorem~\ref{Thm:Limit} and estimate \eqref{continuity_kernels_L1}, we then immediately have
\begin{equation}
	\begin{aligned}
		\|\ueps-u\|_{\textup{E}} \leq C \eps \|\frakK\|_{\calM(0,T)} 
	\end{aligned}
\end{equation}
for some $C>0$, independent of $\eps$, from which the statement follows.
\end{proof}
 The limiting result above largely generalizes the one of~\cite[Theorem 4.1]{kaltenbacher2022parabolic}, where $\frakK$ is the Dirac delta distribution $\delta_0$. Here we allow for a large class of memory kernels, thus establishing the vanishing sound diffusivity limit for the Westervelt equation with general dissipation of time-fractional type.

\subsection{The vanishing thermal relaxation time limit with Mittag-Leffler kernels}  
We now turn our attention to the kernels that were motivated by the presence of thermal relaxation in the heat flux laws of the propagation medium, and have the form
\begin{equation} \label{ML_kernel_Sec4}
	\begin{aligned}
		\frakKeps(t)=&\,\left(\frac{\rt}{\eps}\right)^{a-b}\frac{1}{\eps^b}t^{b-1}E_{a,b}\left(-\left(\frac{t}{\eps}\right)^a\right), \quad a,b \in (0,1].
	\end{aligned}
\end{equation}
Before continuing with the singular limit analysis, it is helpful to recall certain properties of the Mittag-Leffler functions, which can be found, for example, in~\cite{GorenfloMainardiRogosin, jin2021fractional}. \subsection*{Properties of the Mittag-Leffler functions}  We recall that the functions
\begin{equation}
t\mapsto E_{a, b}\left(-\lambda t^a\right), \ a \in [0,1], \ t>0, \ \lambda>0
\end{equation}
are completely monotone for  $b \geq a$ (and, in particular, non-negative); see~\cite[Corollary 3.2]{jin2021fractional}.  Furthermore, the following identity holds:
\begin{equation} \label{identity_ML}
t^{b-1}E_{a,b}(-t^a)=\ddt\Bigl(t^{b}E_{a,b+1}(-t^a)\Bigr).
\end{equation}
We also recall that the Laplace transform of the Mittag-Leffler functions is given by
\begin{equation} \label{Laplace_formula_ML}
\mathscr{L}[t^{b-1}E_{a,b}(- \lambda t^a)](z) =\frac{z^{a-b}}{z^a+\lambda}, \quad a,b>0,\  \Re(z)>0, \  \lambda \geq 0; 
\end{equation}
see~\cite[Lemma 3.2]{jin2021fractional}.  We will additionally rely on the asymptotic behavior of Mittag-Leffler functions:
\begin{align}  \label{asymptotics}
E_{a,b}(-x)\sim\frac{1}{\Gamma(b-a)\, x} {\mbox{ as }x\to\infty}; 
\end{align}
 see, e.g., \cite[Theorem 3.2]{jin2021fractional} .\\ 
\indent In what follows, we intend to take the limit $\eps \searrow0$, while keeping $\rt>0$ fixed. We treat the cases $a-b \leq 0$ and $a-b>0$ (where additionally $\rt/\eps>0$ should be fixed) separately when discussing the limiting behavior.

\subsubsection{Limiting behavior for $a-b \leq 0$}\label{sec:lim_tau_fixedtautheta}

If $0< a\leq b \leq 1$, we will prove that solutions $\ueps$ of \eqref{ibvp_West_general} converge to the solution $u$ of the following time-fractional equation: 
\begin{equation} \label{ibvp_2ndorder_tau0}
	\begin{aligned}
&	((1+2{k} u)\ut)_t-c^2 \Delta u -  \rt^{a-b}
{\textup{D}_t^{a-b+1}\D u} =f,	\end{aligned} 
\end{equation}
supplemented by the same boundary and initial conditions as in \eqref{ibvp_West_general}. Recall that 
$$\textup{D}_t^{a-b+1}\D u = g_{b-a} \Lconv \D u_t. $$ Note also that in case $a=b$, the limiting equation is strongly damped. 
\begin{proposition}\label{Prop:Limit_a<=b} 
Let $\rt>0$ be fixed. Under the assumptions of Theorem~\ref{Thm:Wellp_2ndorder_nonlocal}, the family of solutions $\{\ueps\}_{\eps \in(0,\beps)}$ of  \eqref{ibvp_West_general} with the kernel given by
\begin{equation} \label{kernel_form}
	\begin{aligned}
		\frakKeps(t)=&\,\left(\frac{\rt}{\eps}\right)^{a-b}\frac{1}{\eps^b}t^{b-1}E_{a,b}\left(-\left(\frac{t}{\eps}\right)^a\right) \quad \text{where } a-b \leq 0, \ \, a, b \in (0, 1],
	\end{aligned}
\end{equation}
converges to the solution $u$ of 
\begin{equation}\label{ibvp_West_fractional_limit_tau}
	\left \{	\begin{aligned} 
		&((1+2ku)\ut)_t-c^2 \Delta u -  \frakK_0 *\Delta \ut = f \quad  &&\text{in } \Omega \times (0,T), \\
		&u =0 \quad  &&\text{on } \partial \Omega \times (0,T),\\
		&(u, \ut)=(u_0, u_1), \quad  &&\text{in }  \Omega \times \{0\},
	\end{aligned} \right.
\end{equation}
with the kernel $\frakK_0= \rt^{a-b} g_{b-a}$ in the following sense:
	\begin{equation} \label{linrate_Kuzn_tau}
	\|\ueps-u\|_{\textup{E}}\lesssim  \|(\frakKeps-\frakK_0)\Lconv1\|^{1/2}_{L^1(0,T)}  \sim\eps^{a/2}
	\quad \mbox{ as } \ \eps  \searrow 0.
	\end{equation}
\end{proposition}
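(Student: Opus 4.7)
The plan is to invoke Theorem~\ref{Thm:Limit} with $\eps_1 = \eps$ and $\eps_2 = 0$, comparing $\ueps$ against the solution of the limiting problem \eqref{ibvp_West_fractional_limit_tau} associated with $\frakK_0 = \rt^{a-b} g_{b-a}$ (interpreted as $\delta_0$ when $a = b$, per \eqref{def_g0}). Two ingredients are needed: (a) the limiting kernel $\frakK_0$ satisfies assumptions \eqref{assumption1}--\eqref{assumption2}, so that Theorem~\ref{Thm:Wellp_2ndorder_nonlocal} supplies a solution $u$ of \eqref{ibvp_West_fractional_limit_tau} in the same regularity class; and (b) the quantitative bound
\begin{equation*}
\|(\frakKeps - \frakK_0) \Lconv 1\|_{L^1(0,T)} \lesssim \eps^a.
\end{equation*}
Item (a) is a byproduct of the verification for Abel kernels carried out in Section~\ref{sec:verification}, so I would simply cite it. Given (b), estimate \eqref{continuity_kernels} of Theorem~\ref{Thm:Limit} directly produces $\|\ueps - u\|_{\textup{E}} \lesssim \eps^{a/2}$.

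The key step is an explicit computation of the difference of primitives. The differentiation identity \eqref{identity_ML}, together with the change of variable $y = t/\eps$, yields
\begin{equation*}
(\frakKeps \Lconv 1)(t) = \rt^{a-b}\, \eps^{b-a}\, (t/\eps)^{b}\, E_{a,b+1}(-(t/\eps)^a), \qquad (\frakK_0 \Lconv 1)(t) = \frac{\rt^{a-b}}{\Gamma(b-a+1)}\, t^{b-a}.
\end{equation*}
The classical Mittag-Leffler recurrence
\begin{equation*}
E_{a,\beta}(z) = \frac{1}{\Gamma(\beta)} + z\, E_{a, a+\beta}(z),
\end{equation*}
obtained by re-indexing the defining series \eqref{def_MittagLeffler}, then telescopes these two expressions (with $\beta = b+1-a$ and $z = -(t/\eps)^a$) into the compact formula
\begin{equation*}
(\frakKeps - \frakK_0) \Lconv 1 (t) = -\rt^{a-b}\, t^{b-a}\, E_{a, b+1-a}(-(t/\eps)^a).
\end{equation*}

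Under $a \leq b \leq 1$ we have $b+1-a \geq 1 \geq a$, so $y \mapsto E_{a, b+1-a}(-y^a)$ is completely monotone, bounded on $[0,\infty)$ by $1/\Gamma(b+1-a)$, and decays like $y^{-a}/\Gamma(b+1-2a)$ at infinity by \eqref{asymptotics}. Combined, these give the uniform pointwise bound $|E_{a, b+1-a}(-y^a)| \lesssim (1+y^a)^{-1}$. Splitting the $L^1$-integral at $t = \eps$ then produces an $O(\eps^{b-a+1})$ contribution on $(0, \eps)$ from the boundedness and
\begin{equation*}
\eps^a \int_\eps^T t^{b-2a}\, dt \lesssim \eps^a
\end{equation*}
on $(\eps, T)$ from the decay, under $b + 1 - 2a > 0$. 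Since $a \leq b$ implies $b - a + 1 \geq a$, the first contribution is absorbed into the second, establishing (b).

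The only case not handled by the generic splitting is the degenerate endpoint $a = b = 1$, where $b + 1 - 2a = 0$. Here, however, $E_{1,1}(-t/\eps) = e^{-t/\eps}$ decays exponentially, and a direct computation gives $\|(\frakKeps - \frakK_0) \Lconv 1\|_{L^1(0,T)} = \eps(1 - e^{-T/\eps}) \leq \eps$, which matches $\eps^a$ with $a = 1$, so the same rate $\eps^{a/2}$ applies. The main technical subtlety in implementation is converting the large-$y$ asymptotic \eqref{asymptotics} into a pointwise bound valid on all of $[0, \infty)$; this is standard and can be extracted from the explicit remainder in the full asymptotic expansion of $E_{a,b}$.
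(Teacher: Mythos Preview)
Your proposal is correct and follows the same overall strategy as the paper: invoke Theorem~\ref{Thm:Limit} with $\eps_2=0$, note that $\frakK_0=\rt^{a-b}g_{b-a}$ satisfies \eqref{assumption1}--\eqref{assumption2} by the verification in Section~\ref{sec:verification}, and then quantify $\|(\frakKeps-\frakK_0)\Lconv1\|_{L^1(0,T)}$. You also arrive at the same closed form for the difference of primitives,
\[
(\frakKeps-\frakK_0)\Lconv1(t)=-\rt^{a-b}\,t^{b-a}E_{a,\,1+b-a}\!\left(-(t/\eps)^a\right),
\]
though the paper obtains it via the Laplace transform \eqref{Laplace_formula_ML} rather than the series recurrence you use.

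The genuine methodological difference is in how the $L^1$ norm of this expression is evaluated. You bound $E_{a,1+b-a}(-y^a)$ pointwise by $(1+y^a)^{-1}$ and split the integral at $t=\eps$, which forces a separate treatment of the endpoint $a=b=1$ and only yields the upper bound $\|(\frakKeps-\frakK_0)\Lconv1\|_{L^1}\lesssim\eps^a$. The paper instead exploits nonnegativity (valid since $1+b-a\geq a$) together with the antiderivative identity \eqref{identity_ML} once more to integrate \emph{exactly}:
\[
\|(\frakKeps-\frakK_0)\Lconv1\|_{L^1(0,T)}=T^{\,1+b-a}\,E_{a,\,2+b-a}\!\left(-(T/\eps)^a\right),
\]
and then applies the asymptotics \eqref{asymptotics} a single time. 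This is shorter, handles all $0<a\leq b\leq1$ uniformly (no special case at $a=b=1$), and delivers the two-sided asymptotic $\sim\eps^a$ claimed in \eqref{linrate_Kuzn_tau}, not merely $\lesssim\eps^a$. Your argument as written establishes the convergence rate but not the sharpness of the right-hand side; that would require a matching lower bound, which the exact integration supplies automatically.
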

\begin{proof}
By Theorem~\ref{Thm:Limit} (up to checking that $\frakK_0$ satisfies \eqref{assumption1} and \eqref{assumption2}), we have
\begin{equation}
\| \ueps-u\|_{\textup{E}}\leq C  \|(\mathfrak{K}_{\eps}-\mathfrak{K}_{0})*1\|^{1/2}_{L^1(0,T)}
\end{equation}
with a constant $C>0$ that is independent of $\eps$. We next rely on the Laplace transform to further establish the asymptotic behavior of the right-hand side as $\eps \searrow 0$. To this end, we use \eqref{Laplace_formula_ML} and the formula
\[
\mathscr{L}[\frakK_0](z)= \rt^{a-b} \mathscr{L}[g_{b-a}](z)= \rt^{a-b} z^{a-b}.
\]
\indent Without loss of generality, we assume $\rt =1$. Consider the Laplace transform of $(\frakKeps-\frakK_0)*1$:
\begin{align}
\mathscr{L}[(\frakKeps-\frakK_0)*1](z)=&\, (\mathscr{L}[\frakK_{\eps}]- \mathscr{L}[\frakK_0])(z)\mathscr{L}[1](z) \\
=&\, \left(\frac{z^{a-b}}{(\eps z)^a+1}- z^{a-b}\right)\frac{1}{z}
=-\eps^a \frac{z^{a-(1+b-a)}}{(\eps z)^a+1}.
\end{align}
From here we conclude that \[((\frakKeps-\frakK_0)*1)(t)=-t^{b-a}E_{a,1+b-a}\left(-\left(\frac{t}{\eps}\right)^a\right).\] On account of the non-negativity of the function $t\mapsto E_{a,1+b-a}\left(-\left(\frac{t}{\eps}\right)^a\right)$ (which is ensured by $1+b-a \geq a$ due to the assumptions on $a$ and $b$), we find that
\[
\begin{aligned}
\|(\frakKeps-\frakK_0)*1\|_{L^1(0,T)}
=&\, \int_0^T t^{b-a}E_{a,1+b-a}\left(-\left(\frac{t}{\eps}\right)^a\right) \dt \\
=&\, T^{1+b-a}E_{a,2+b-a}\left(-\left(\frac{T}{\eps}\right)^a\right).
\end{aligned}
\] 
Then by asymptotic properties of the Mittag-Leffler functions in \eqref{asymptotics}, we have
\[
\begin{aligned}
	\|(\frakKeps-\frakK_0)*1\|_{L^1(0,T)}=&\, T^{1+b-a}E_{a,2+b-a}\left(-\left(\frac{T}{\eps}\right)^a\right)\\
	\sim&\, \frac{T^{1+b-a}}{\Gamma(2+b-2a)} \left(\frac{T}{\eps}\right)^{-a} \qquad
	\text{ as }\ \frac{T}{\eps}\to\infty,
\end{aligned}
\] 
which concludes the proof.
\end{proof}
\subsubsection{Limiting behavior for $a>b$}\label{sec:lim_tau_fixedratio}
We next discuss the limiting behavior of solutions to \eqref{ibvp_West_general} in the remaining case for the Mittag-Leffler kernels, which is $1 \geq a>b>0$. To take the limit $\eps\searrow0$ of \eqref{ibvp_West_general} with the kernel \eqref{ML_kernel_Sec4} when $a>b$, now we need the following additional assumption on $\rt$:
	\begin{equation}\label{assumptiion_rt/eps_const}
		\rt = \rt(\eps) \quad \textrm{and } \left(\frac{\rt}{\eps}\right)^{a-b} = \rho^{a-b} = \textrm{constant},
	\end{equation}
under which the kernels have the form
\begin{equation}
	\begin{aligned}
		\frakKeps(t)=&\,\rho^{a-b} \frac{1}{\eps}\left(\frac{t}{\eps} \right)^{b-1}E_{a,b}\left(-\left(\frac{t}{\eps}\right)^a\right), \quad 1 \geq a>b>0.
	\end{aligned}
\end{equation}
\begin{proposition}\label{Prop:Limit_a>b}
Under the assumptions of Theorem~\ref{Thm:Wellp_2ndorder_nonlocal} and assumption \eqref{assumptiion_rt/eps_const}, 
the family of solutions $\{\ueps\}_{\eps \in(0,\beps)}$ of \eqref{ibvp_West_general} with
\begin{equation}
	\begin{aligned}
		\frakKeps(t)=&\,\left(\frac{\rt}{\eps}\right)^{a-b}\frac{1}{\eps^b}t^{b-1}E_{a,b}\left(-\left(\frac{t}{\eps}\right)^a\right), \qquad 1 \geq a>b>0,
	\end{aligned}
\end{equation}
converges in the energy norm to the solution $u$ of the inviscid problem \eqref{ibvp_West_limit} at the following rate:
	\begin{equation} \label{linrate_Kuzn_tau2}
	\|\ueps-u\|_{\textup{E}}\lesssim \|\frakKeps*1\|^{1/2}_{L^1(0,T)} \sim \eps ^{(a-b)/2} \quad \mbox{ as } \ \eps\searrow0.
	\end{equation} 
\end{proposition}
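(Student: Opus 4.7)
The plan is to follow the strategy of Proposition~\ref{Prop:Limit_a<=b}, but now with a \emph{trivial} limiting kernel. I apply Theorem~\ref{Thm:Limit} with $\eps_2 = 0$, taking as the candidate limit kernel $\frakK_0 \equiv 0$; this satisfies \eqref{assumption1} and \eqref{assumption2} trivially, and the corresponding limit equation is exactly the inviscid Westervelt problem \eqref{ibvp_West_limit}, whose well-posedness is covered by Theorem~\ref{Thm:Wellp_2ndorder_nonlocal}. The estimate \eqref{continuity_kernels} then delivers at once
\[
\|\ueps-u\|_{\textup{E}} \;\lesssim\; \|\frakKeps*1\|^{1/2}_{L^1(0,T)},
\]
and it remains only to quantify the right-hand side.

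For this quantitative step, I would use identity \eqref{identity_ML} together with the rescaling $s = t/\eps$ in $\int_0^t \frakKeps(s)\,\textup{d}s$; a direct computation yields the closed-form expression
\[
(\frakKeps*1)(t) \;=\; \rho^{a-b}\,(t/\eps)^{b}\, E_{a,b+1}\!\left(-(t/\eps)^a\right).
\]
Because $b+1 \geq a$ under the hypothesis $0<b<a\le 1$, the function $t\mapsto E_{a,b+1}(-(t/\eps)^a)$ is completely monotone and hence non-negative, so $\|\frakKeps*1\|_{L^1(0,T)}$ coincides with the integral of this expression. Applying \eqref{identity_ML} once more (with $b$ replaced by $b+1$) under the same substitution produces
\[
\|\frakKeps*1\|_{L^1(0,T)} \;=\; \rho^{a-b}\,T^{b+1}\,\eps^{-b}\,E_{a,b+2}\!\left(-(T/\eps)^a\right).
\]
Invoking the asymptotic \eqref{asymptotics} as $(T/\eps)^a \to \infty$ then gives
\[
\|\frakKeps*1\|_{L^1(0,T)} \;\sim\; \frac{\rho^{a-b}\,T^{b+1-a}}{\Gamma(b+2-a)}\,\eps^{a-b} \qquad\text{as } \eps \searrow 0,
\]
and the claimed rate follows after taking a square root. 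The same conclusion can be read off by mimicking the Laplace-transform argument of Proposition~\ref{Prop:Limit_a<=b}, starting from $\mathscr{L}[\frakKeps*1](z) = \rho^{a-b}(\eps z)^{a-b}/\bigl(z((\eps z)^a+1)\bigr)$.

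The main obstacle is bookkeeping rather than analysis. First, assumption \eqref{assumptiion_rt/eps_const} must be used at precisely the right place to keep the prefactor $\rho^{a-b}$ independent of $\eps$; without it, the factor $(\rt/\eps)^{a-b}$ would blow up as $\eps\searrow 0$ since $a>b$, and no convergence would be available. Second, one must verify that assumptions \eqref{assumption1} and \eqref{assumption2} hold uniformly in $\eps$ for this family of kernels, so that the constants hidden inside $\lesssim$ in \eqref{continuity_kernels} remain bounded; this verification is deferred to Section~\ref{sec:verification}, as is done for Proposition~\ref{Prop:Limit_a<=b}. The qualitative contrast with Proposition~\ref{Prop:Limit_a<=b} is then transparent: for $a>b$ the total mass $\|\frakKeps\|_{\mathcal{M}(0,T)}$ of the scaled Mittag-Leffler kernel vanishes with $\eps$, so the dissipative term disappears entirely in the limit, whereas for $a\le b$ it persists as a residual fractional damping.
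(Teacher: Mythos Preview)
Your proof is correct and follows essentially the same route as the paper: apply Theorem~\ref{Thm:Limit} with the trivial limit kernel $\frakK_0\equiv 0$, identify $(\frakKeps*1)(t)=\rho^{a-b}(t/\eps)^b E_{a,b+1}(-(t/\eps)^a)$, integrate once more via \eqref{identity_ML}, and read off the $\eps^{a-b}$ asymptotics from \eqref{asymptotics}. The only cosmetic difference is that the paper derives the closed form for $\frakKeps*1$ through the Laplace transform and its scaling property, whereas you obtain it directly from \eqref{identity_ML} and a change of variable; you already note these are equivalent.

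One small inaccuracy in your closing commentary: the quantity $\|\frakKeps\|_{\mathcal{M}(0,T)}=\|\frakKeps\|_{L^1(0,T)}$ does \emph{not} vanish as $\eps\searrow 0$; under \eqref{assumptiion_rt/eps_const} it tends to the finite nonzero constant $\rho^{a-b}\|\frakK\|_{L^1(0,\infty)}$ (cf.\ Section~\ref{sec:verification}). What vanishes is the \emph{signed} integral $\int_0^\infty \frakK(t)\,\textup{d}t=\lim_{r\to\infty} r^b E_{a,b+1}(-r^a)=0$, so that $\frakKeps\to 0$ in the sense of distributions despite having uniformly bounded $L^1$ mass. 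This is precisely the phenomenon the paper flags in the paragraph preceding the proof, and it does not affect the validity of your argument.
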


The statement of Proposition~\ref{Prop:Limit_a>b} can seem at first inspection unintuitive. Indeed, with the ratio $\rt/\eps$ constant as assumed in \eqref{assumptiion_rt/eps_const}, we have
	 \[
	\frakKeps(t) = \rho^{a-b }\frac{1}{\eps} \frakK\left(\frac{t}{\eps}\right),
	\] 
	where $\frakK$ does not depend on $\eps$. One would naively expect that as the rescaling parameter $\eps \searrow 0$, the kernel would converge in the sense of distributions to a Dirac mass at 0. Results exist in this sense; see \cite{conti2005singular,conti2006singular}.
	However, in our setting, we do not require $\frakK \geq 0$ which allows for the kernel $\frakKeps$ to converge to the zero function even if $\frakK \neq 0$.
\begin{proof}
The statement follows by Theorem~\ref{Thm:Limit} and the asymptotic behavior of $\|(\frakKeps-\frakK_0)*1\|_{L^1(0,T)}=\|\frakKeps*1\|_{L^1(0,T)}$ as $\eps \searrow 0$. To establish the latter, we note that
	\[
	\mathscr{L}[\frakKeps](z)=\rho^{a-b}\frac{(\eps z)^{a-b}}{(\eps z)^a+1}\to 0\quad  \text{as} \ \eps \searrow 0
	\]	
	since $a>b$. Considering the identity 
	\[
	\mathscr{L}[\frakKeps*1](z)=\rho^{a-b}\frac{1}{z}\frac{(\eps z)^{a-b}}{(\eps z)^a+1}
=\rho^{a-b}\eps \frac{(\eps z)^{a-(1+b)}}{(\eps z)^a+1},
	\]
we find, using the scaling property of the Laplace transform, that
	\[	(\frakKeps*1)(t)=\rho^{a-b}\left(\frac{t}{\eps}\right)^{b}E_{a,b+1}\left(-\left(\frac{t}{\eps}\right)^a\right).
	\] 
	Now, using a change of variable, we have
	\[
	\begin{aligned}
		\|\frakKeps*1\|_{L^1(0,T)}=\eps  \rho^{a-b} \int_0^{T/\eps} \vert t^{b}E_{a,b+1}(-t^a) \vert \dt.
	\end{aligned}
	\] 
Since $b+1\geq a$, the Mittag-Leffler function is completely monotone and thus non-negative. Furthermore, identity  \eqref{identity_ML} and asymptotics \eqref{asymptotics} imply that
\begin{equation} \label{asymptotics_a_geq_b}
\begin{aligned}
	\eps \rho^{a-b} \int_0^{T/\eps} t^{b}E_{a,b+1}(-t^a) \dt  
=&\, \eps \rho^{a-b}\left(\frac{T}{\eps}\right)^{b+1}E_{a,b+2}\left(-\left(\frac{T}{\eps}\right)^a\right) \\
\sim&\, \rho^{a-b} \frac{T^{1+b-a}}{\Gamma(b+2-a)} \eps^{a-b},
\end{aligned}
\end{equation}
which completes the proof.
\end{proof}
\begin{remark}
Assumption \eqref{assumptiion_rt/eps_const} means that $\rt$ should converge to zero at least as fast as $\eps$. Since \eqref{asymptotics_a_geq_b} implies that
\begin{equation} 
	\begin{aligned}
		\eps \rho^{a-b} \int_0^{T/\eps} t^{b}E_{a,b+1}(-t^a) \dt  
		\sim&\, \rt^{a-b} \frac{T^{1+b-a}}{\Gamma(b+2-a)},
	\end{aligned}
\end{equation}
one could remove this assumption by letting $\rt \searrow 0$ and keeping $\eps>0$ (that is, the thermal relaxation time) fixed.
\end{remark}

\section{Verifying the uniform boundedness and coercivity assumptions for different classes of kernels} \label{sec:verification}
The remainder of this paper is devoted to proving that the different classes of kernels discussed in Section~\ref{Sec:Gurtinpipkin_laws} and the limiting kernels $\frakK_0$ from the previous section indeed satisfy assumptions \eqref{assumption1} and \eqref{assumption2} imposed in the well-posedness and singular limit analysis. 

\subsection{How to verify assumption \eqref{assumption1}} For kernels that have the form $\frakKeps= \eps \frakK$ with $\eps \in (0, \beps)$ and $\frakK \in L^1(0,T) \cup \{\delta_0\}$ independent of $\eps$, we immediately obtain
\[
\|\frakKeps\|_{\calM(0,T)} \leq \beps \|\frakK\|_{\calM(0,T)}.
\]
\indent The Mittag-Leffler kernels are more interesting from the point of view of obtaining the uniform $L^1(0,T)$ bound. 
Since the kernels in the Gurtin--Pipkin forms of the Compte--Metzler laws can be written as
\begin{equation} \label{ML_kernels_eps}
\frakKeps(t)= \left(\frac{\rt}{\eps}\right)^{a-b} \frac{1}{\eps} \frakK\left(\frac{t}{\eps}\right), 
\end{equation}
where
\begin{equation} \label{def_ML_kernel_frakK}
	\frakK(t) = t^{b-1} E_{a,b}(-t^a),
\end{equation}
 we have
\begin{equation} \label{L1_frakKeps_frakK}
\|\frakKeps\|_{L^1(0,T)}= \left(\frac{\rt}{\eps}\right)^{a-b}\|\frakK \|_{L^1(0,T/\eps)}.
\end{equation}
As in the limiting analysis, we discuss different cases with respect to $a$ and $b$ to determine the behavior of the right-hand side term as $\eps\searrow0$.\\

\noindent $\bullet$ 
If $a=b$, then
\[
\|\frakKeps\|_{L^1(0,T)}=  \|\frakK \|_{L^1(0,T/\eps)}.
\]
Therefore, as $\eps \searrow 0 $, we need to require that
$\frakK \in L^1(0,\infty)$. Recall that in this case the Mittag-Leffler function is completely monotone and therefore, in particular, non-negative. Furthermore,  identity \eqref{identity_ML} implies
\[
\|\frakK\|_{L^1(0,r)}
=\int_0^r \ddt\Bigl(t^{b}E_{a,b+1}(-t^a)\Bigr)\dt = r^{b}E_{a,b+1}(-r^a). 
\]
By the asymptotic behavior of Mittag-Leffler functions \eqref{asymptotics}, we find that
\begin{equation}\label{eq:uniformL1_GPLII}
	t^{b}E_{a,b+1}(-t^a)\sim\frac{1}{\Gamma(b+1-a)} t^{b-a} \quad \text{ as } \ t\to\infty.
\end{equation}
This asymptotic behavior implies that $\|\frakK\|_{L^1(0,\infty)}$ with $\frakK$ given in \eqref{def_ML_kernel_frakK} is finite with $a=b$. \\
\indent Among the kernels in Section~\ref{Sec:Gurtinpipkin_laws} motivated by the physics of nonlinear acoustics, this assumption holds true for the exponential kernel \eqref{kernel_exp} where $(a,b)=(1,1)$ and the kernel coming from the GFE law where $(a,b)=(\alpha, \alpha)$; see Table~\ref{tab:ker_par_mod}.
\\

\noindent $\bullet$ If $a<b$, asymptotics \eqref{eq:uniformL1_GPLII} is valid but the function tends to infinity.   
However, note that for $\rt>0$ fixed, we have a uniform bound on $\|\frakKeps\|_{L^1(0,T)}$ on account of
\[
\|\frakKeps\|_{L^1(0,T)}=\left(\frac{\rt}{\eps}\right)^{a-b} \|\frakK\|_{L^1(0,T/\eps)} \sim \frac{1}{\Gamma(b+1-a)} \left(\frac{T}{\rt}\right)^{b-a} \quad \textrm{as } \eps \searrow 0.
\]
Among the kernels discussed in Section~\ref{Sec:Gurtinpipkin_laws}, the assumption $a<b$ holds for the GFE II law, where $(a,b)=(\alpha,1)$; see Table~\ref{tab:ker_par_mod}. 
\\

\noindent $\bullet$ If $a>b$, then in view of \eqref{L1_frakKeps_frakK}, inevitably
\[\|\frakKeps\|_{L^1(0,T)}\to +\infty \ \text{ as } \eps\searrow0, \] except in the trivial case $\frakK \equiv 0$. This asymptotic behavior provides a motivation for assuming the ratio $\rt/\eps$ to be fixed in the limiting analysis when $a>b$ in \eqref{ML_kernels_eps}, so that we have
\begin{equation}
\|\frakKeps\|_{L^1(0,T)}= \rho^{a-b}\|\frakK \|_{L^1(0,T/\eps)}.
\end{equation}	The physical interpretation of this assumption is that one needs a scaling parameter, $\rt$, to match $\tau$ on the right-hand side of the fractional flux laws  GFE I and GFE III; see Table~\ref{tab:ker_par_mod}. Thus if assumption \eqref{assumptiion_rt/eps_const} holds, we can make direct use of asymptotics \eqref{asymptotics}, from which we conclude 
\[\frakK(t) \sim \frac{1}{\Gamma(b-a)} t^{b-1-a} \ \text{ as } \ t\to\infty.\] Therefore, $\|\frakK\|_{L^1(0,\infty)}<\infty$.

\subsection{How to verify assumption \eqref{assumption2}} There are different ways of verifying coercivity assumption \eqref{assumption2}; we discuss two of them here  needed to tackle different classes of kernels we have seen so far. \\

$\bullet$ {\bf Fourier approach to proving coercivity.} 
One approach of verifying \eqref{assumption2}, which will be used for Mittag-Leffler kernels when $a\geq b$, is to employ Fourier analysis, similarly to \cite[Lemma 2.3]{Eggermont1987}. To this end, let us denote by $\overline{f}^t$ the extension of a function $f$ by zero outside $\mathbb{R}\setminus(0,t)$, and by $\mathcal{F}$, the Fourier transform. The following
identity holds for the Mittag-Leffler functions:
\[
\mathcal{F}[\overline{x^{b-1}E_{a,b}(\lambda x^a)}^\infty](\omega) =\frac{1}{\sqrt{2\pi}}\frac{(\imath\omega)^{a-b}}{(i\omega)^a-\lambda}, \quad \lambda \in \R.
\]
Using this formula, we can obtain the Fourier transforms of the Mittag-Leffler kernels in Section~\ref{Sec:Gurtinpipkin_laws}; see Table~\ref{table:kernels}. Let us also denote by $\Fconv$ the Fourier convolution \[(f\Fconv g)(t)=\int_{\mathbb{R}} f(t-s)g(s)\ds,\] and by an overline the complex conjugation. Furthermore, let 
\[ v(\omega)=(\mathcal{F}\overline{\frakKeps}^\infty)(\omega) (\mathcal{F}\overline{y}^t)(\omega), \qquad m(\omega)= \sqrt{2\pi}/(\mathcal{F}\overline{\frakKeps}^\infty)(\omega).\]
\begin{table}[h]	
	\captionsetup{width=13cm}
	\begin{adjustbox}{max width=\textwidth}
		\begin{tabular}[h]{|m{5cm}||m{3.5cm}|m{5.2cm}|}
			\hline
			\vspace*{1mm} \hspace*{1.8cm}$\frakKeps(t)$& 	\vspace*{1mm}$\sqrt{2\pi}\,\mathcal{F}\overline{\frakKeps}^\infty(\omega)$ 	\vspace*{1mm}&$\frac{1}{2\pi}m(\omega):=1/(\sqrt{2\pi}\mathcal{F}\overline{\frakKeps}^\infty(\omega))$ 
			\\
			\hline\hline
			\vspace*{1mm}	$	\left(\dfrac{\rt}{\eps}\right)^{a-b}\dfrac{1}{\eps^b}t^{b-1}E_{a,b}\left(-\left(\frac{t}{\eps}\right)^a\right)$ & ${\left(\dfrac{\rt}{\eps}\right)^{a-b}\dfrac{(\eps\imath\omega)^{a-b}}{(\eps\imath\omega)^a+1}}$&  ${\left(\dfrac{\rt}{\eps}\right)^{b-a} \left[(\eps\imath\omega)^{b}+(\eps\imath\omega)^{b-a}\right]}$\\	
			\hline
		\end{tabular}
	\end{adjustbox}
	~\\[2mm]
	\caption{\small  Fourier transforms of the Mittag-Leffler kernels discussed in Section~\ref{Sec:Gurtinpipkin_laws}} \label{table:kernels}
\end{table}
~\\
Then using Plancherel's theorem and the Fourier convolution theorem (see, e.g. \cite[Theorems 1,2, Section 4.3]{evans2010partial}), we have
\begin{equation}\label{coercivity_calK_RecalFcalK}
	\begin{aligned}
		\int_0^t\Bigl((\frakKeps\Lconv y)(s),y(s)\Bigr)_{L^2}\ds 
		=&\, \int_\mathbb{R} \left((\overline{\frakKeps}^\infty\Fconv\overline{y}^t)(s), \overline{y}^t(s)\right)_{L^2}\ds\\
		=&\, \sqrt{2\pi} \int_\mathbb{R} \left((\mathcal{F}\overline{\frakKeps}^\infty)(\omega) (\mathcal{F}\overline{y}^t)(\omega), \overline{(\mathcal{F}\overline{y}^t)(\omega)}\right)_{L^2}  \domega \\
		=&\, \int_\mathbb{R} \overline{m(\omega)} \left\|v(\omega)\right\|_{\Ltwo}^2  \domega \\
		=&\, \int_\mathbb{R} \Re(m(\omega)) \left\|v(\omega)\right\|_{\Ltwo}^2  \domega, 
	\end{aligned}
\end{equation}
where we have also used the fact that the left-hand side of the identity is real valued.
Thus, since the real part of the Fourier transform of a real-valued function is even, if
\begin{equation}\label{ReFcalK}
	\Re(m(\omega))=\Re(\sqrt{2\pi}/\mathcal{F}\overline{\frakKeps}^\infty)(\omega)\geq 2\pi \,\tilde{C}_{a,b}, \quad  \omega\in (0,\infty),
\end{equation}
by Plancherel's theorem and the Cauchy--Schwarz inequality, we have 
\[
\int_0^t\Bigl((\frakKeps\Lconv y)(s),y(s)\Bigr)_{L^2}\ds\geq \tilde{C}_{a,b} \|\frakKeps\Lconv y\|_{L^2_t(L^2(\Omega))}^2 
\]
for all $t \in (0,T)$. Therefore, if condition \eqref{ReFcalK} holds, the above estimate implies that assumption \eqref{assumption2} holds with the constant 
\begin{equation} \label{def_Ceps_Fourier}
\CfrakKeps=\tilde{C}_{a,b},
\end{equation}
where we highlight the dependence on $a$ and $b$.
An inspection of the last column in Table~\ref{table:kernels} and using the change of variables $\eps\omega \mapsto \omega$ yields
\[\tilde{C}_{a,b}=\inf_{\omega \in (0,\infty)} \Re(m(\omega)) = 2\pi \left(\frac{\rt}{\eps}\right)^{b-a} \inf_{\omega \in (0,\infty)} \Re((\imath\omega)^{b} + (\imath\omega)^{b-a} ).\]
The infimum is attained and it is bounded away from zero for $0<b \leq a \leq 1$. Indeed, for $a=b$, we have
\[
\tilde C_{a,a} = 2\pi \inf_{\omega \in (0,\infty)} \Re((\imath\omega)^{b}+1) = 2\pi\left(\inf_{\omega \in (0,\infty)}\cos(b\pi/2)\omega^b +1\right) =2\pi,
\]
while for $0<b<a\leq 1$,  
\begin{equation}\label{eq:c_a_b}
\tilde C_{a,b}  = 2\pi\left(\frac{\rt}{\eps}\right)^{b-a} \cos((b-a)\pi/2) \frac{a}{b} \left(\frac{(a-b) \cos((b-a)\pi/2)}{b\cos(b\pi/2)}\right)^{\frac{b-a}{b}}.
\end{equation}
Therefore, we conclude that the Mittag-Leffler kernels \eqref{ML_kernels_eps} with $0<b \leq a \leq 1$ satisfy assumption \eqref{assumption2} with the coercivity constant \eqref{def_Ceps_Fourier}. Recalling our discussion in Section~\ref{Sec:Gurtinpipkin_laws}, this allows us to cover the kernels originating from the Compte--Metzler laws GFE, GFE I, and GFE III; see Table~\ref{tab:ker_par_mod}. \\
\indent To verify \eqref{assumption2} for the Abel-type kernels and the kernel coming from law GFE II, we need an alternative approach.
~\\[2mm]
$\bullet$ {\bf Non-Fourier approach for completely monotone kernels.}  
An alternative approach of verifying \eqref{assumption2} relies on a coercivity property from~\cite[Lemma B.1]{kaltenbacher2021determining}, combined with a result on resolvents of the first kind, cf. \cite{gripenberg1990volterra}. This approach will be applicable on completely monotone kernels, such as the Abel kernel and the Mittag-Leffler functions when $b \geq a$. \\
\indent We first revisit~\cite[Lemma B.1]{kaltenbacher2021determining} to lower the regularity assumption on the memory kernel from $L^p(0,T)$ to $L^1(0,T)$. 
\begin{lemma}[Lemma B.1 in \cite{kaltenbacher2021determining} revisited]\label{lem:LemB1revisited}
	Given $T\in(0,\infty]$, let $\calK \in L^1(0,T)$. Furthermore, assume that $\calK \geq0$ on $(0,T)$ and that for all $t_0\in(0,T)$ it holds 
	\begin{align}
	\calK \in W^{1,1}(t_0, T), \quad \calK'|_{[t_0,T]} \leq 0 \  \text{ a.e.} 
	\end{align}
	Then
	\begin{equation}\label{idkB1}
	\intT (\calK \Lconv y_t, y)_{L^2}\ds \geq \frac 12 \calK \Lconv \|y\|_{\Ltwo}^2 (T) -\frac12 \intT \calK(s)\ds \|y(0)\|_{\Ltwo}^2
	\end{equation}    
	for all $y\in W^{1,1}(0,T; \Ltwo)$.
\end{lemma}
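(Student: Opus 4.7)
The plan is to reduce \eqref{idkB1} to a nonnegativity statement for a double integral and then prove that statement via a level-set decomposition of the monotone kernel.

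\emph{Reduction.} Writing $\phi(t) := \|y(t)\|_{\Ltwo}^2$ and using $\phi_t = 2(y,y_t)_{L^2}$, a direct differentiation yields (formally)
\begin{equation*}
\tfrac{1}{2}\tfrac{d}{dt}(\calK\Lconv\phi)(t) = \tfrac{1}{2}\calK(t)\phi(0) + \intt \calK(t-s)(y(s),y_t(s))_{L^2}\ds.
\end{equation*}
Integrating this identity over $(0,T)$ expresses the right-hand side of \eqref{idkB1} as the double integral $\intT\intt \calK(t-s)(y(s),y_t(s))_{L^2}\ds\dt$, so that \eqref{idkB1} is equivalent to
\begin{equation*}
J(T) := \intT \intt \calK(t-s)(y_t(s), y(t)-y(s))_{L^2}\ds\dt \geq 0.
\end{equation*}

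\emph{Nonnegativity via level sets.} The crucial algebraic observation is the telescoping identity
\begin{equation*}
\int_a^t (y_t(s), y(t)-y(s))_{L^2}\ds = \tfrac{1}{2}\|y(t)-y(a)\|_{\Ltwo}^2, \qquad 0 \leq a \leq t,
\end{equation*}
which shows that restricting the inner $s$-integration to any sub-interval of $[0,t]$ contributes a nonnegative square. Monotonicity and non-negativity of $\calK$ yield the level-set representation
\begin{equation*}
\calK(\tau) = \calK(T) + \int_\tau^T |\calK'(\lambda)|\, d\lambda, \quad \tau \in (0,T].
\end{equation*}
Inserting this into $\calK(t-s)$ inside $J(T)$, moving the $\lambda$-integration outermost via Fubini, and observing that the constraint $t-s \leq \lambda$ becomes $s \geq (t-\lambda)^+$, the telescoping identity applied in the inner integral gives
\begin{equation*}
\begin{aligned}
J(T) =\,& \calK(T) \intT \tfrac{1}{2}\|y(t)-y(0)\|_{\Ltwo}^2 \dt \\
&+ \intT |\calK'(\lambda)| \intT \tfrac{1}{2}\|y(t) - y((t-\lambda)^+)\|_{\Ltwo}^2 \dt\, d\lambda \geq 0.
\end{aligned}
\end{equation*}

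\emph{Low-regularity justification and main obstacle.} Since $\calK$ may blow up at $0$, the differentiation in the reduction step is not directly justified. I would therefore first establish the inequality for the truncated kernel $\calK_n(s) := \calK(\max(s,1/n))$, which belongs to $W^{1,1}(0,T)$, is non-negative, and is non-increasing (with $\calK_n' = 0$ on $(0,1/n)$ and $\calK_n' = \calK'$ on $(1/n,T)$); for $\calK_n$ the two preceding steps apply verbatim. Since $\calK_n \nearrow \calK$ pointwise with $\calK_n \leq \calK \in L^1(0,T)$, monotone convergence gives $\calK_n \to \calK$ in $L^1(0,T)$, and combined with the embedding $y \in W^{1,1}(0,T;\Ltwo) \hookrightarrow C([0,T];\Ltwo)$ and Young's convolution inequality, the inequality passes to the limit in each of the three terms of \eqref{idkB1}. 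The principal obstacle is precisely this low regularity at the origin: it forbids a direct pointwise manipulation of $(\calK\Lconv y_t)(t)\,y(t)$, so the truncation-and-limit argument is the only nontrivial technical point. Once the approximation is in place, the hypothesis $\calK' \leq 0$ is exactly what furnishes the non-negative weights $|\calK'(\lambda)|$ in the level-set decomposition, and the rest of the argument is algebraic.
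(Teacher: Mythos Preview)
Your argument is correct. The overall architecture matches the paper's: establish \eqref{idkB1} for a regularized kernel in $W^{1,1}(0,T)$ and pass to the limit via $L^1$ convergence of the kernels. Your truncation $\calK_n(s)=\calK(\max(s,1/n))$ is essentially the paper's $\tilde{\calK}_n$; the paper additionally takes a maximum with $w/n$ for a fixed positive weight $w\in L^1$ and invokes dominated convergence, while your monotone convergence argument is slightly cleaner and achieves the same end.

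The genuine difference lies in how you handle the core inequality for the regularized kernel. The paper simply refers back to the proof of the original Lemma~B.1 in \cite{kaltenbacher2021determining} as a black box, whereas you supply a self-contained argument: the reduction to $J(T)\geq0$, the level-set representation $\calK_n(\tau)=\calK_n(T)+\int_\tau^T|\calK_n'(\lambda)|\,d\lambda$, and the telescoping identity $\int_a^t(y_t(s),y(t)-y(s))_{L^2}\,ds=\tfrac12\|y(t)-y(a)\|_{\Ltwo}^2$. This is an elegant and elementary route that makes transparent precisely how the sign condition $\calK'\leq0$ and the positivity $\calK\geq0$ are used (they furnish the non-negative weights $|\calK_n'(\lambda)|$ and $\calK_n(T)$, respectively). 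One minor remark: the reduction identity itself actually holds for any $\calK\in L^1(0,T)$ by a direct Fubini computation, without differentiation; the singularity at the origin only obstructs the level-set step (where Fubini requires $\calK'\in L^1$), so the truncation is needed there but not earlier.
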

\begin{proof}
Similarly to the proof of \cite[Lemma B.1]{kaltenbacher2021determining}, we use an approximating sequence 
$\{\calK_n\}_{n\in \N}\subseteq W^{1,1}(0, T)$ defined as 
	\begin{align}
\calK_n (t)=\, \max\{\tilde{\calK}_n(t),w(t)/n\}
	\end{align}
	for some fixed positive weight function $w\in L^1(0,T)$ and
		\begin{align}
	\tilde{\calK}_n(t):=\, 1|_{[0,1/n]} \calK(1/n) + 1|_{[1/n,T]}(t) \calK(t).
	\end{align}
The identity \eqref{idkB1} can be shown to hold for $\calK_n$ in place of $\calK$ as in the proof of \cite[Lemma B.1]{kaltenbacher2021determining}. 
It thus remains to prove that $\calK_n$ converges to $\calK$ in the $L^1$ norm. 
To do so, we can rely on Lebesgue's dominated convergence theorem, using pointwise convergence of $K_n$ to $K$ and the fact that for any $n\in\mathbb{N}$, we have $0< \calK_n\leq \calK+ w \in L^1(0,T)$.
\end{proof}

Having in mind the verification of \eqref{assumption2}, we next extend the reasoning from \cite[Lemma 3.1]{kaltenbacher2022inverse} to a large family of completely monotone kernels $\frakK$. Completely monotone functions belong to $C^\infty(0,\infty)\subset W^{1,1}([t_0,T])$ for all $t_0>0$ and thus satisfy the regularity assumptions of Lemma~\ref{lem:LemB1revisited}.\\
\indent Let $\frakK \in L_{\textup{loc}}^1(\R^+)$ be completely monotone on $(0,\infty)$ and suppose $\frakK>0$ for almost all $t \in \R^+$.
By \cite[Theorem 5.5.4]{gripenberg1990volterra}, $\frakK$ has a resolvent of the first kind which is the sum of a point mass at zero and a completely monotone, locally integrable function:
\begin{equation} \label{def_r}
\mathfrak{r} = A\delta_0 + \mathfrak{f} \text{ such that }\frakK\Lconv\mathfrak{r}=1. 
\end{equation}
It is easy to see that $A$ has to be non-negative. Moreover, $A = 0$ if and only if $\displaystyle \lim_{t\to 0 } \frakK(t) =\infty$. In other words, due to the complete monotonicity, the $A$-term in \eqref{def_r} appears if and only if $\frakK \in L^\infty(0,T)$.

\begin{lemma}\label{lemma:someboundforA1}
	Let $\frakK \in L_{\textup{loc}}^1(\R^+)$ be completely monotone, nonconstant and a.e.\ positive ($\frakK>0$ for almost all $t \in \R^+$) and let \[\mathfrak{r} = A\delta_0 + \mathfrak{f}\] be its resolvent of the first kind. Then
	\begin{equation}
	\begin{aligned}\label{eq:lemma_ineq_bound}
	\int_0^{t} \intO \left(\frakK* y \right)(s) \,y(s)\dxs
	\geq&\, \frac 12 A\|\frakK \Lconv y\|_{\Ltwo}^2 (t)  + \mathfrak{f}(T) \,\,\frac12 \intt\|\frakK \Lconv y\|_{\Ltwo}^2\ds
	\end{aligned}
	\end{equation}
for all $	y\in L^2(0,t;L^2(\Omega))$ and $t \in (0,T)$, where for $T<\infty$ we have
$\mathfrak{f}(T)>0$. 
\end{lemma}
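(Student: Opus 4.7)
The plan is to pass from the bilinear form in $y$ on the left of \eqref{eq:lemma_ineq_bound} to one in $z := \frakK \Lconv y$ via the resolvent identity, and then invoke Lemma~\ref{lem:LemB1revisited} on the completely monotone tail $\mathfrak{f}$. Observe that $z(0) = 0$ since a Laplace convolution on $[0,t]$ vanishes at $t=0$. Convolving the defining identity $\frakK \Lconv \mathfrak{r} = 1$ with $y$ and using $\mathfrak{r} = A\delta_0 + \mathfrak{f}$ gives
\[
\int_0^{\cdot} y(\sigma)\, d\sigma \;=\; \mathfrak{r} \Lconv z \;=\; A z + \mathfrak{f} \Lconv z.
\]
Differentiating in time and using $z(0) = 0$ produces, at least formally, the pointwise identity $y = A z_s + \mathfrak{f} \Lconv z_s$, so that
\[
\int_0^t (z, y)_{\Ltwo}\, ds \;=\; A\int_0^t (z_s, z)_{\Ltwo}\, ds \;+\; \int_0^t (\mathfrak{f}\Lconv z_s, z)_{\Ltwo}\, ds.
\]
The first term is $\tfrac{A}{2}\|z(t)\|_{\Ltwo}^2$ by the fundamental theorem of calculus and $z(0)=0$, which is exactly the $A$-contribution on the right of \eqref{eq:lemma_ineq_bound}.

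For the second term I would apply Lemma~\ref{lem:LemB1revisited} with $\calK = \mathfrak{f}$ and the role of $y$ played by $z$. The hypotheses are met: by \cite[Theorem 5.5.4]{gripenberg1990volterra}, $\mathfrak{f}$ is completely monotone, hence locally integrable, non-negative, non-increasing on $(0,\infty)$, and of class $C^\infty(0,\infty) \subset W^{1,1}(t_0, T)$ for every $t_0 > 0$. Since $z(0) = 0$, the boundary term in \eqref{idkB1} vanishes, yielding
\[
\int_0^t (\mathfrak{f}\Lconv z_s, z)_{\Ltwo}\, ds \;\geq\; \tfrac{1}{2}\,\bigl(\mathfrak{f}\Lconv \|z\|_{\Ltwo}^2\bigr)(t) \;=\; \tfrac{1}{2}\int_0^t \mathfrak{f}(t-s)\, \|z(s)\|_{\Ltwo}^2 \, ds.
\]
Monotonicity of $\mathfrak{f}$ gives $\mathfrak{f}(t-s) \geq \mathfrak{f}(T)$ for $s \in [0,t]$ and $t \in (0,T]$, producing the $\mathfrak{f}(T)$-term on the right of \eqref{eq:lemma_ineq_bound}. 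Strict positivity $\mathfrak{f}(T) > 0$ follows because $\mathfrak{f}$ cannot be identically zero, as otherwise $\mathfrak{r} = A\delta_0$ would force $A\frakK \equiv 1$, contradicting that $\frakK$ is nonconstant; and any nonzero completely monotone function admits a Bernstein representation $\int_0^\infty e^{-t\lambda}\, d\mu(\lambda)$ with $\mu$ a nonzero positive measure, hence is strictly positive on $[0,\infty)$.

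The main technical obstacle is rigorously justifying the formal differentiation that yields $y = A z_s + \mathfrak{f}\Lconv z_s$, since $y$ is merely in $L^2(0,t;\Ltwo)$ and $z$ is then generally not in $W^{1,1}(0,T;\Ltwo)$. I would handle this by a density argument: approximate $y$ by a sequence $y_n \in C^1([0,T]; \Ltwo)$ in $L^2(0,T;\Ltwo)$; the convolutions $z_n := \frakK \Lconv y_n$ are continuously differentiable on $(0,T]$ thanks to the smoothness of $\frakK$ on $(0,\infty)$, so all the computations above are classical for $(y_n, z_n)$ and Lemma~\ref{lem:LemB1revisited} applies directly. Young's convolution inequality controls $\|\frakK \Lconv (y-y_n)\|_{L^2(0,T;\Ltwo)}$ and the analogous term with $\mathfrak{f}$, so that each side of the resulting inequality converges to the corresponding side of \eqref{eq:lemma_ineq_bound}; the lower bound survives the passage to the limit because it is linear in $\|z_n\|^2_{\Ltwo}$.
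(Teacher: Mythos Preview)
Your proof is correct and follows essentially the same route as the paper: both set $v=z=\frakK\Lconv y$, use the resolvent identity to rewrite $y=\mathfrak{r}\Lconv z_s$, apply Lemma~\ref{lem:LemB1revisited} with $\calK=\mathfrak{f}$ (exploiting $z(0)=0$), and close by a density argument on $y$. The only substantive difference is the proof that $\mathfrak{f}(T)>0$: the paper argues by contradiction directly from the resolvent identity $\frakK\Lconv\mathfrak{r}=1$, showing that $\mathfrak{f}(T)=0$ together with complete monotonicity would force $\frakK$ to be constant, whereas you invoke the Hausdorff--Bernstein--Widder representation to conclude that a nonzero completely monotone function is strictly positive everywhere on $(0,\infty)$. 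Your argument is shorter and cleaner, at the cost of importing a classical theorem; the paper's argument is self-contained but a bit more laborious. One minor overstatement: you claim $z_n=\frakK\Lconv y_n$ is continuously differentiable on $(0,T]$, but for $\frakK$ singular at the origin and $y_n(0)\neq0$ one only gets $z_n\in W^{1,1}(0,T;\Ltwo)$ via $(z_n)_t=\frakK\Lconv(y_n)_t+\frakK\, y_n(0)$, which is nonetheless precisely the regularity Lemma~\ref{lem:LemB1revisited} requires.
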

\begin{proof}
	We prove the inequality for $y\in C^\infty([0,T];L^2(\Omega))$; the statement follows then by density of $C^\infty([0,T])$ in $L^2(0,T)$. Applying Lemma~\ref{lem:LemB1revisited} on $\calK=\mathfrak{f}$ yields
	\begin{equation}
		\begin{aligned}
	&\int_0^t (\mathfrak{r}\Lconv v_t, v)_{L^2} \ds\\
	 =&\, \int_0^t \bigl((\mathfrak{f}\Lconv v_t, v)_{L^2} + A (v_t, v)_{L^2}\bigr) \ds \\
	\geq&\, \frac 12 A(\|v(t)\|_{\Ltwo}^2 - \|v(0)\|_{\Ltwo}^2) + \frac12 \big(\mathfrak{f}\Lconv \|v\|_{\Ltwo}^2(t) - \intt \mathfrak{f} \ds \|v(0)\|_{\Ltwo}^2\big),
	\end{aligned}
	\end{equation}
	for any $v\in W^{1,1}(0,T)$.
	By picking $v = \frakK \Lconv y$, 
	we have that $(\frakK \Lconv y)(0) = 0$ (since $y\in L^\infty(0,T)$), and subsequently that $\mathfrak{r}\Lconv v_t = y$.
	We then conclude that 
	\begin{align}
	\int_0^t (y, \frakK \Lconv y)_{L^2} \ds \geq &\frac 12 A\|\frakK \Lconv y\|_{\Ltwo}^2 (t)  + \frac12 (\mathfrak{f}\Lconv \|\frakK \Lconv y\|_{\Ltwo}^2)(t)
	\\
	\geq &\frac 12 A\|\frakK \Lconv y\|_{\Ltwo}^2 (t)  + \inf_{\tau\in(0,T)}\mathfrak{f}(\tau) \,\,\frac12 \intt\|\frakK \Lconv y\|_{\Ltwo}^2\ds,
	\end{align}
where by complete monotonicity $\displaystyle \inf_{\tau\in(0,T)}\mathfrak{f}(\tau)=\mathfrak{f}(T)$.

To prove that 
$\mathfrak{f}(T)>0$, 
we reason by contradiction. Suppose that $\mathfrak{f}(T)=0$. From the complete monotonicity of $\mathfrak{f}$ and its local absolute continuity it is clear that $\mathfrak{f}(t_1) = 0$ for all $t_1\geq T$. 
	We define $\tilde{T}=\inf\{t>0\ : \ \mathfrak{f}(t)=0\}$ and use that then for all $t_1>\tilde{T}$:
	$1=\frakK\Lconv\mathfrak{r}(t_1)= A \frakK(t_1)+ \int_0^{\tilde{T}} \frakK(t_1-s) \mathfrak{f}(s)\ds ,$
thus 
	\begin{equation}\label{diff0}
0= \int_0^{\tilde{T}} \Bigl(\frakK(\tilde{T}-s)-\frakK(t_1-s)\Bigr)\mathfrak{f}(s)\ds +A [\frakK(\tilde T)- \frakK(t_1)],
	\end{equation}
where by minimality in the definition of $\tilde{T}$ we have $f>0$ on $(0,\tilde{T})$. Moreover, by complete monotonicity of $\frakK$, we have that $A[\frakK(\tilde{T})-\frakK(t_1)]\geq0$.
Similarly, the factor $\frakK(\tilde{T}-s)-\frakK(t_1-s)$ is non-negative, and we conclude from \eqref{diff0} that it actually vanishes.
That is, for all $t_1>\tilde{T}$, $s\in [0,\tilde{T}]$ the identity $\frakK(\tilde{T}-s)=\frakK(t_1-s)$ holds, which by setting $s=\tilde{T}$, $t=t_1-s$ implies $\frakK(t)=\frakK(0)$ for all $t>0$.
This contradicts our assumption that $\frakK$ is nonconstant.
\end{proof}

Lemma~\ref{lemma:someboundforA1} implies an estimate of the type \eqref{assumption2} in case $\frakKeps$ is not constant. 
In the special case of constant kernels $\frakKeps\equiv c_\eps$ for some $c_\eps>0$, it is clear that assumption \eqref{assumption2} does not hold but instead the straightforward identity
	\begin{equation}
	\begin{aligned}
	\int_0^{t} \intO \left(\frakKeps* y \right)(s) \,y(s)\dxs
	=\, \frac{1}{2c_\eps} \|(\frakKeps \Lconv y)(t)\|_{\Ltwo}^2
	\end{aligned}
	\end{equation}
holds for all $	y\in L^2(0,t;L^2(\Omega))$ 
(note that $\|(\frakKeps \Lconv y)(0)\|_{\Ltwo}^2=0$).
The resulting PDE in that case is actually the inviscid Westervelt equation \eqref{ibvp_West_limit}
with $c^2$ and $f$ replaced by $c^2+c_\eps$ and $f-c_\eps \Delta u(0)$, respectively. 
Its well-posedness was already mentioned above to be a byproduct of Theorem~\ref{Thm:Wellp_2ndorder_nonlocal}.
Also letting $c_\eps$ converge to some limit, e.g., $c_\eps\to0$ is technically feasible due to the uniform energy bounds from Theorem~\ref{Thm:Wellp_2ndorder_nonlocal}, but probably not of high practical interest.

\medskip

	With $b\geq a$, the Mittag-Leffler function is completely monotone and satisfies assumptions of Lemma~\ref{lemma:someboundforA1}. Thus, recalling the kernels discussed in Section~\ref{Sec:Gurtinpipkin_laws}, we can use this result to verify the remaining case of the GFE II kernel, where $(a,b)=(\alpha, 1)$. For this particular kernel, the resolvent is given by
	$$\mathfrak{r} = \left(\frac{\rt}{\varepsilon}\right)^{1-\alpha}\left[\eps \delta_0 + \frac1{\varepsilon^{\alpha-1}}g_\alpha\right].$$
	One may indeed readily check that \[\mathscr{L}\left[{\left(\frac{\rt}{\eps}\right)^{\alpha-1}}\frac{1}{\eps} E_{\alpha,1}(-(\frac{t}{\eps})^\alpha) \Lconv \mathfrak{r}\right](z) = 1/z.\]
	\indent Similarly, the function $\eps \rt^{-\alpha}g_{\alpha}$ satisfies assumptions of Lemma~\ref{lemma:someboundforA1} and its resolvent is given by $\frac1\eps \rt^{\alpha}g_{1-\alpha}$.

\subsection*{Results of the verification} For convenience, we compile in Table~\ref{tab:ker_assu_veri} the results of the verification of assumptions \eqref{assumption1} and \eqref{assumption2} for memory kernels arising in the context of nonlinear acoustics. The last column in Table~\ref{tab:ker_assu_veri} provides the coercivity constant $\CfrakKeps$ in assumption \eqref{assumption2} and another argument in favor of assuming the ratio $\rt/\eps$ to be constant for laws GFE I and III. In that case, it is straightforward to check using Table~\ref{tab:ker_assu_veri} that $\CfrakKeps$ can be bounded uniformly from below by some $\cAtwo>0$ for $\eps \in(0,\beps)$.\\
\indent We point out that the theoretical framework developed in this paper, in particular pertaining to the continuity of the solution with respect to a parameter-dependent kernel, $\frakK_{\eps}$, in Theorem~\ref{Thm:Limit}, can be easily used to study the limiting behavior of the equations with respect to other physical parameters of interest. For instance, Theorem~\ref{Thm:Limit} can be employed in studying the limiting behavior of equations \eqref{ibvp_West_general} with the Mittag-Leffler kernels listed in Table~\ref{tab:ker_assu_veri} as $\alpha \nearrow 1$ . One can set $\eps=1-\alpha$ to frame the study within the previous theory. The key remaining component of such a limiting analysis would be to prove that assumptions \eqref{assumption1} and \eqref{assumption2} hold uniformly for $\alpha$ in a neighborhood of $1^-$. 

\begin{center}
	\begin{tabular}[h]{|l|c|c|c|}
		\hline
		&&&\\
		Examples of  kernels $\frakKeps$ in nonlinear acoustics&  \eqref{assumption1} &\eqref{assumption2}& $\CfrakKeps$\\[5pt]
		\hline\hline
		
		$\eps  {\rt^{-\alpha}} g_{\alpha}$	& \cmark & {\cmark}  & $\frac{1}{\eps}\rt^\alpha g_{1-\alpha}(T)$   \\[3mm]
		
		$\frac{1}{\eps} \exp\left(-\frac{t}{\eps}\right) $		& \cmark  & \cmark & $2\pi$  \\[3mm]
		
		GFE I: \		${\left(\frac{\rt}{\eps}\right)^{1-\alpha}}\frac{1 }{\eps^{{2\alpha-1}}}t^{2\alpha-2} E_{\alpha,2\alpha-1}(-(\frac{t}{\eps})^\alpha)$	& \cmark & \cmark & $\tilde C_{\alpha,2\alpha-1}$  \\[4mm]
		
		GFE II: \		${\left(\frac{\rt}{\eps}\right)^{\alpha-1}}\frac{1}{\eps} E_{\alpha,1}(-(\frac{t}{\eps})^\alpha)$	& \cmark & {\cmark} & $\rt^{1-\alpha} g_\alpha(T)$   \\[4mm]
		
		GFE III: \		${\left(\frac{\rt}{\eps}\right)^{1-\alpha}}\frac{1}{\eps^{{\alpha}}} t^{\alpha-1} E_{1,\alpha}(-(\frac{t}{\eps}))$ & \cmark & \cmark & $\tilde C_{1,\alpha}$   \\[4mm]
		GFE: \		$\frac{1}{\eps^\alpha} t^{\alpha-1}E_{\alpha,\alpha}(-(\frac{t}{\eps})^\alpha)$	& \cmark  & \cmark & $2\pi$  \\
		\hline
	\end{tabular}
	~\\[2mm]
	\captionof{table}{Kernels for  flux laws discussed in Section~\ref{Sec:AcousticModeling} and the assumptions they satisfy; $\tilde C_{\alpha,2\alpha-1}>0$ and $\tilde C_{1,\alpha}>0$ are constants defined in \eqref{eq:c_a_b} that can be made independent of $\eps$ (for GFE I and III, provided $\rt/\eps$ is constant).} \label{tab:ker_assu_veri}
\end{center} 
\section*{Conclusion and outlook}
In this work, we have investigated a family of nonlinear Westervelt equations with general dissipation of Djrbashian--Caputo-derivative type.  We rigorously studied them in terms of the local well-posedness and their limiting behavior with respect to the parameter $\eps$, which may be physically interpreted as the sound diffusivity or the thermal relaxation time. As we have seen, the limiting behavior is influenced by the dependence of the memory kernel $\frakKeps$ on $\eps$, and for this reason, we have considered different classes of kernels separately. 
 The framework developed here and leading to Theorem~\ref{Thm:Limit} allows extending the limiting study to other parameters of interest, as long as the parameter-dependent kernels satisfy assumptions \eqref{assumption1} and \eqref{assumption2}. \\
\indent Future work will be concerned with the study of the limiting behavior of quasilinear equations of Kuznetsov and Blackstock type given in \eqref{Blackstock_nonlocal} and \eqref{Kuznetsov_nonlocal}, respectively, where we expect that the kernel assumptions will need to be further tailored to the needs of each of these models. It would also be of interest to extend the limiting analysis to settings with practical boundary conditions, such as Neumann or absorbing boundary conditions. We expect that the main ideas from the current energy arguments can be adjusted but with a higher level of technicality due to the lack of Poincaré-Friedrichs inequality. Finally, the important question of whether global results in time can be established when the limiting problems contain dissipation (such as \eqref{ibvp_West_fractional_limit_tau}) remains open.

\begin{appendices} 
	\section{Unique solvability of the nonlocal semi-discrete problem}	\label{Appendix:Semi-discrete}
	We present here the proof of existence of a unique semi-discrete approximation of \eqref{ibvp_West_general_lin}. As in~\cite{kaltenbacher2022parabolic}, we may  approximate the solution by
	\begin{equation}
		\begin{aligned}
			u^{\eps, n}(x,t) =\sum_{i=1}^n \xi^n_i(t)v_i(x),
		\end{aligned}
	\end{equation}
	where $\{v_i\}_{i=1}^\infty$ are smooth eigenfunctions of the Dirichlet-Laplacian operator.
	\\ \indent With  $V_n=\textup{span}\{v_1, \ldots, v_n\}$,  the semi-discrete problem is given by
	\begin{equation} \label{semi-discrete}
		\begin{aligned}
			\begin{multlined}[t] ((1+2k \phi) \utt^{\eps, n}- c^2 \D u^{\eps, n} - \frakKeps*\D u_t^{\eps, n}+2k \phi_t u_t^{\eps,n}-f, v_j) = 0 \end{multlined}
		\end{aligned}
	\end{equation}
	for all $j=1, \ldots, n$, with approximate initial conditions $(u^{\eps, n}, \ut^{\eps, n})\vert_{t=0}=(u_{0}^n, u_{1}^n)$ taken as $\Ltwo$ projections of $(u_0, u_1)$ onto $V_n$. 
	With $\boldsymbol{\xi}=[\xi^n_1 \ \xi^n_2 \ \ldots \ \xi^n_n]^T$, the approximate problem can be rewritten in matrix form
	\begin{equation} \label{matrix_eq}
		\begin{aligned}
			\mathbb{M}_{\aaa}(t)\boldsymbol{\xi}_{tt}+ c^2\mathbb{D} \boldsymbol{\xi}+ \mathbb{D}\, \frakKeps*\boldsymbol{\xi}_t+\mathbb{M}_{\aaa_t}(t)\bfxi_t= \boldsymbol{f},
		\end{aligned}
	\end{equation}
	where the entries of matrices $\mathbb{M}_{\aaa}(t)=[\mathbb{M}_{\aaa, ij}]$, $\mathbb{M}_{\aaa_t}(t)=[\mathbb{M}_{\aaa_t, ij}]$, $\mathbb{D}=[\mathbb{D}_{ij}]$, and vector $\boldsymbol{f}=[\boldsymbol{f}_{j}]$ are given by
	\begin{equation} \label{matrices}
		\begin{aligned}
			& \mathbb{M}_{\aaa, ij}(t)= ((1+2k \phi)  v_i, v_j)_{L^2}, \quad && \mathbb{M}_{\aaa_t, ij}(t)= (2k \phi_t  v_i, v_j)_{L^2},   \\
			& \mathbb{D}_{ij}= -(\Delta v_i,  v_j)_{L^2},\quad  && \boldsymbol{f}_i(t)=(f(t), v_i)_{L^2}.
		\end{aligned}
	\end{equation}
	If	we  introduce the vectors of coordinates of the approximate initial data in the basis: 
	\[
	\boldsymbol{\xi}_0=[\xi^n_{0,1} \ \xi^n_{0,2} \ \ldots \ \xi^n_{0,n}]^T, \quad \boldsymbol{\xi}_1=[\xi^n_{1,1} \ \xi^n_{1,2} \ \ldots \ \xi^n_{1,n}]^T,
	\]
	then by setting $\bfmu=\boldsymbol{\xi}_{tt}$, we have
	\begin{equation} \label{eq_xi}
		\bfxi_t(t)=1\Lconv\bfmu+\bfxi_1, \quad	\boldsymbol{\xi}(t)=\boldsymbol{\xi}_0 +t \boldsymbol{\xi}_1+1*1*\bfmu.
	\end{equation}
	Therefore, the semi-discrete problem can be rewritten as
	\begin{equation}
		\begin{aligned}
			\begin{multlined}[t]
				\mathbb{M}_{\aaa}(t)\bfmu+ c^2  \mathbb{D}\left(\boldsymbol{\xi}_0 +t \boldsymbol{\xi}_1+1*1*\bfmu\right)+ \mathbb{D}\,\frakKeps *(1*\bfmu+\bfxi_1)+ \mathbb{M}_{\aaa_t}(t)(1*\bfmu+\bfxi_1) =\boldsymbol{f}. \end{multlined}
		\end{aligned}
	\end{equation}
	Since $\mathbb{M}_{\aaa} \in L^\infty(0,T)$ is positive definite due to assumption \eqref{non-deg_phi}, the semi-discrete problem can be further seen as a system of Volterra integral equations: 
	\begin{equation}
		\begin{aligned}
			\bfmu+ \boldsymbol{K} * \bfmu = \boldsymbol{\tilde{f}},
		\end{aligned}
	\end{equation}
	with
	\[
	\boldsymbol{K}= \mathbb{M}_{\aaa}^{-1}(t)\left\{c^2 \mathbb{D}1*1 + \mathbb{D}\,\frakKeps *1+\mathbb{M}_{\aaa_t}(t)\right\}
	\]
	and
	\[
	\boldsymbol{\tilde{f}}=\mathbb{M}_{\aaa}^{-1}(t)\left\{-c^2 \mathbb{D}\left(\boldsymbol{\xi}_0 +t \boldsymbol{\xi}_1\right)- \mathbb{D}\,\frakKeps *\bfxi_1-\mathbb{M}_{\aaa_t}(t) \bfxi_1+\boldsymbol{f}(t)\right\}.
	\]
	By the existence theory for systems of Volterra integral equations of the second kind~\cite[Ch.\ 2, Theorem 4.5]{gripenberg1990volterra}, there is a unique solution $\bfmu \in L^\infty(0,T)$. Combined with initial data, from $\bfxi_{tt}=\bfmu$, we can then conclude that there exists a unique $\bfxi \in W^{2, \infty}(0,T)$ and thus $u^{n, \eps} \in W^{2, \infty}(0,T; V_n)$. 
\end{appendices}
\section*{Acknowledgments}
The work of the first author was supported by the Austrian Science Fund FWF under the grant DOC 78.

\bibliography{references}{}
\bibliographystyle{siam} 
\end{document}